\def\eu{\mathfrak}
\def\ma{\mathbb}
\def\Q{\mathbb Q}
\def\Z{\mathbb Z}
\def\N{\mathbb N}
\def\J{{\mathcal J}}
\def\I{{\mathcal I}}
\renewcommand{\gcd}{{\rm gcd}}
\renewcommand{\deg}{{\rm \,deg}\,}
\def\S#1{S_{\infty}(#1)}
\def\e#1{e_{\infty}(#1)}
\def\f#1{f_{\infty}(#1)}
\def\p{{\mathcal P}_{\infty}}
\def\g#1{#1_{{\eu {ge}}}}
\def\F{{\ma F}_q}
\def\lam#1{k(\Lambda_#1)}
\def\smid{\,|\,}
\newcommand{\Gal}{\operatorname{Gal}}
\newcounter{bean}
\def\l{
\begin{list}
{\rm{(\alph{bean}).-}}{\usecounter{bean}
\setlength{\labelwidth}{0.8in}
\setlength{\labelsep}{0.3cm}
\setlength{\leftmargin}{1cm}}}
\theoremstyle{plain}
\newtheorem{teo}{Theorem}[section]
\newtheorem{prop}[teo]{Proposition}
\newtheorem{cor}[teo]{Corollary}
\newtheorem{ejm}[teo]{Example}
\newtheorem{no}[teo]{Remark}
\title[Genus fields of Kummer $\ell^n$--cyclic extensions]
{Genus fields of Kummer $\ell^n$--cyclic extensions}
\author[C. Reyes]{Carlos Daniel Reyes--Morales}
\address{Departamento de Control Autom\'atico\\
Centro de Investigaci\'on y de Estudios Avanzados del I.P.N.}
\email{mcenigm@gmail.com, creyes@ctrl.cinvestav.mx}
\author[G. Villa]
{Gabriel Villa--Salvador}
\address{
Departamento de Control Autom\'atico\\
Centro de Investigaci\'on y de Estudios Avanzados del I.P.N.}
\email{gvillasalvador@gmail.com, gvilla@ctrl.cinvestav.mx}
\subjclass[2010]{Primary 11R60; Secondary 11R29, 11R58}
\keywords{Genus fields, Kummer extensions, congruence function fields, global fields, Dirichlet characters, cyclotomic function fields, cyclic extensions.}
\date{June 21, 2020}
\begin{document}

\begin{abstract}

We give a construction of the genus field for Kummer $\ell^n$-cyclic extensions of rational congruence function fields, where $\ell$ is a prime number. First, we compute the genus field of a field contained in a cyclotomic function field, and then for the general case.
This generalizes the result obtained by Peng for a Kummer $\ell$-cyclic extension. Finally, we study the extension $(K_1K_2)_{\eu{ge}}/(K_1)_{\eu{ge}}(K_2)_{\eu{ge}}$, for $K_1$, $K_2$ abelian extensions of $k$.

\end{abstract}

\maketitle

\section{Introduction}\label{S1}

The origin of genus fields dates back to C. F. Gauss \cite{Gau1801} in his work about binary quadratic forms. For a finite field extension $K$ of $\Q$, the rational number field, the genus field $\g K$ is defined as the maximum unramified field extension of $K$ such that it is the composite $Kk^{\ast}$, where $k^{\ast}$ is an abelian field extension over $\Q$. This definition is due to A. Fr\"ohlich \cite{Fro83}. We have $K\subseteq K_{{\eu {ge}}}\subseteq K_H$, where $K_H$ denotes the Hilbert class field over $K$. Originally the concept of genus fields was given for quadratic field extensions of ${\Q}$. 

Using Dirichlet characters, H. W. Leopoldt \cite{Leo53} determined the narrow genus field $K_{{\eu {gex}}}$ of a finite abelian field extension $K$ over ${\Q}$, i.e, $K_{{\eu {gex}}}$ is the maximum abelian field extension of $\Q$ such that $K_{{\eu {gex}}}/K$ is unramified at any finite prime of $K$. This generalizes the results of H. Hasse \cite{Has51} who introduced genus theory for quadratic extensions of number fields.

M. Ishida described the narrow genus field $K_{{\eu {gex}}}$ of any finite extension of ${\Q}$ \cite{Ish76}. X. Zhang \cite{Xia85} gave a simple expression of $K_{{\eu {ge}}}$ of any finite abelian extension $K$ of $\Q$ using Hilbert ramification theory.

For function fields, the notion of Hilbert class field as the maximum unramified abelian extension of a congruence function field $K/{\ma F}_q$ is not suitable since it contains all the constant field extensions $K_m:=K{\ma F}_{q^m}$ for every natural number $m$. Therefore the maximum unramified abelian extension of $K$ is of infinite degree over $K$.

M. Rosen \cite{Ros87} gave a definition of an analogue of the Hilbert class field of $K$ for a fixed finite nonempty set $S$ of prime divisors of $K$. Given a finite nonempty set $S$ of places of a global function field $K$, the Hilbert class field (relative to $S$) $K_{H,S}$ of $K$ is defined as the maximum unramified abelian extension of  $K$ such that the places in $S$ are completely descomposed in $K_{H,S}$. Using Rosen's definition of Hilbert class field, it is possible to give a proper concept of genus fields along the lines of number fields.

R. Clement \cite{Cle92} found a narrow genus field of a cyclic extension of $k:={\ma F}_q(T)$ of prime degree $\ell$ dividing $q-1$. She found the genus field using class field theory and defining the Hilbert class field following the ideas of H. Hasse \cite{Has51}. Later, S. Bae and J. K. Koo \cite{BaeKoo96} were able to generalize the results of Clement with the methods developed by Fr\"ohlich \cite{Fro83}.

G. Peng \cite{Pen2003} explicitly described the genus theory for Kummer function fields extensions of prime degree. Later S. Hu and Y. Li \cite{HuLi2010} described explicitly the ambiguous ideal  classes and the genus field of an Artin--Schreier extension of a  rational congruence function field. In \cite{RockM}, it was studied the particular case of abelian finite $p$-extensions and the explicit description of their genus field was given. That article also studied finite abelian extensions of global rational functions fields, and the term {\it conductor of constants} was introduced for those extensions and it was computed in terms of other invariants of the field.

In \cite{MaRzVi2013, MaRzVi2015} it was developed a theory of genus fields of congruence function fields using Rosen's definition of Hilbert class field. Similarly to the case of numer fields, it was defined the genus field $\g K$ of $K$, as the maximum extension of $K$ such that $K\subseteq\g K\subseteq K_{H,S}$ with $\g K=K k^{\ast}$ and $k^{\ast}/k$ an abelian extension.
When $K/k$ is an abelian extension, $\g K$ is the maximum unramified extension over $K$ such that the primes in $S$ are fully decomposed in $\g K/K$. The methods used there were based on Leopoldt's ideas using Dirichlet characters and gave a general description of $\g K$ in terms of the Dirichlet characters associated to the field $K$. The genus field $\g K$ was obtained for an abelian extension $K$ of $k$ and $S$ the set of the infinite primes.
The method was used to give $\g K$ explicitly when $K/k$ is a cyclic extension of prime degree $\ell\smid q-1$ (Kummer) or cyclic of degree $p$ where $p$ is the characteristic (Artin--Schreier) and also when $K/k$ is a $p$--cyclic extension (Witt). Then, the method was used in \cite{BaRzVi2013} to describe $\g K$ explicitly when $K/k$ is an $\ell^n$-cyclic extension, where $\ell$ is a prime number such that $\ell^n\mid q-1$, $K/k$ is a cyclotomic extension and under a strong restriction.

In this paper, we will use the results obtained in \cite{MaRzVi2013} to describe explicitly the genus field of a cyclic extension of degree $\ell^n$ where $\ell^n\smid q-1$. In this way, we complete what was developed in \cite{BaRzVi2013}. Here we consider a cyclic Kummer extension of degree $\ell^n$, not necessarily cyclotomic and without any restrictions. In case $n=1$ this is the result of Peng. Our method is based on Leopoldt's ideas and therefore they differ from the methods used by Peng which are based on the global function fields analog to the exact hexagon of P. E. Conner and J. Hurrelbrink \cite{Hexagon}. In \cite{MaRzVi2013} the case $n=1$ is described in a little bit different way from how it was originally presented by Peng. Here we will show that using our methods it is possible to give the same description as in the original article.

Finally, given two finite abelian extensions $K_i/k$, $i=1,2$, we have $(K_1)_{\eu{ge}}(K_2)_{\eu{ge}}$ $\subseteq (K_1K_2)_{\eu{ge}}$ but in general there is no equality. This fact is a big obstruction to study the genus field of a finite abelian extension $K/k$ since we cannot just study the components say $K=K_1\dots K_s$. In Section \ref{S5} we study the extension $(K_1K_2)_{\eu{ge}}/(K_1)_{\eu{ge}}(K_2)_{\eu{ge}}$. The main result is that the degree of this extension divides $(q-1)^2$. It is worth to mention that the factor $(q-1)^2$ is due to two different facts. One factor comes from the cyclotomic case where in a composition of two fields such that $\p$ ramifies in both components, in the composition some ramification of $\p$ shifts to descomposition since we have tame ramification. The other factor comes from the various descomposition groups given in the main theorem of the structure of the genus field $\g K=E_\eu{ge}^HK$. As a consequence we obtain that for extensions of degree relatively prime to $q-1$, we have $(K_1)_{\eu{ge}}(K_2)_{\eu{ge}}= (K_1K_2)_{\eu{ge}}$. This was used in \cite{RockM}

\section{Notation}\label{S2}

The notation we will use throughout the paper is the following.

\begin{itemize}
    \item[]$\gcd\quad$ denotes the greatest common divisor;
    \item[]${\rm lcm}\quad$ denotes the least common multiple;
    \item[]$\nu_\ell\quad$ denotes the valuation respect to $\ell$, a prime number;
    \item[]$k\quad$ denotes the rational function field ${\mathbb F}_q(T)$;
    \item[]$R_T\quad$ denotes the polynomial ring ${\mathbb F}_q[T]$;
    \item[]$R_T^+\quad$ denotes the set of monic and irreducible polynomials in $R_T$;
    \item[]$K\quad$ denotes a cyclic finite Kummer extension of $k$; 
    \item[]$\p\quad$ denotes the infinite prime in $k$;
    \item[]$S_\infty(K)\quad$ denotes the set of primes in $K$ over $\p$;
    \item[]$\e{E/F}\quad$ denotes the ramification index of $\p$ in the field extension $E/F$;
    \item[]$\f{E/F}\quad$ denotes the inertia degree of $\p$ in the field extension $E/F$;
    \item[]$\Lambda_N\quad$ denotes the $N$-torsion of the Carlitz's module for $N\in R_T\setminus\{0\}$;
    \item[]$\g K\quad$ denotes the genus field of $K$;
    \item[]$K_{{\eu {gex}}}\quad$ denotes the extended or narrow genus field of $K$;
    \item[]$\lam N\quad$ denotes the cyclotomic function field over $k$ determined by $N$;
    \item[]$\lam N^+\quad$ denotes the maximum real subfield of $\lam N$;
    \item[]$L_n\quad$ denotes the subfield of ${\lam{{T^{-n-1}}}}$ fixed by ${\mathbb F}^*_q$;
    \item[]$K_m\quad$ denotes the field $K{\mathbb F}_{q^m}$;
    \item[]${_n}K\quad$ denotes the field $KL_n$;
    \item[]${_n}K_m\quad$ denotes the field $K{\mathbb F}_{q^m}L_n$.
    \item[]
\end{itemize}

Let $\ell$ be a prime number such that $\ell^n\smid q-1$.
Let $D\in R_T$, $D=P_1^{\alpha_1}\cdots P_r^{\alpha_r}$ with $P_1,\dots,P_r\in R_T^+$ different polynomials, $r\geq 1$, $\alpha_1,\dots,\alpha_r\in\N$ and $1\leq \alpha_j\leq \ell^n-1$, $ 1\leq j\leq r$. Let $E:=k(\sqrt[\ell^n]{(-1)^{\deg D}D})$. We have $E\subseteq \lam{D}$\cite[Corolario 9.5.12]{VilCC}. We denote $D^*:=(-1)^{\deg D}D$, that is $k(\sqrt[\ell^n]{(-1)^{\deg D}D})=k(\sqrt[\ell^n]{D^*})$. Note that if $\ell^n\smid \deg D$, then $k=k(\sqrt[\ell^n]{(-1)^{\deg D}})$ and thus $k(\sqrt[\ell^n]{D^*})=k(\sqrt[\ell^n]{D})$.

Let $\alpha_j=b_j\ell^{a_j}$ with $\gcd(b_j,\ell)=1$ and $\deg P_j=c_j\ell^{d_j}$ with $\gcd(c_j,\ell)=1$, $1\leq j\leq r$. For each $1\leq j\leq r$, we define $E_j:=k(\sqrt[\ell^n]{(P_j^{\alpha_j})^*})$. We have $E_j\subseteq k(\Lambda_{P_j})$, $1\leq j\leq r$. Note that
$\sqrt[\ell^n]{(-1)^{\deg P_j^{\alpha_j}}P^{\alpha_j}}=\sqrt[\ell^n]{(-1)^{b_j\ell^{a_j} \deg P_j}P_j^{b_j\ell^{a_j}}}=\sqrt[\ell^{n-a_j}]{(-1)^{\deg P_j^{b_j}}P_j^{b_j}}$. Further $\sqrt[\ell^{n-a_j}]{(-1)^{\deg P_j^{b_j}}P_j^{b_j}}=(\sqrt[\ell^{n-a_j}]{(-1)^{\deg P_j}P_j})^{b_j}$ $\in k(\sqrt[\ell^{n-a_j}]{(-1)^{\deg P_j}P_j})$. We have that $P_j^{b_j}$ is $\ell$-power free and thus $P_j$ is fully ramified in $k(\sqrt[\ell^{n-a_j}]{(-1)^{\deg P_j^{b_j}}P_j^{b_j}})/k$. Therefore
\begin{align*}
[k(\sqrt[\ell^{n-a_j}]{(-1)^{b_j\deg P_j}P_j^{b_j}}):k]&=\deg(X^{\ell^{n-a_j}}-(-1)^{b_j\deg P_j}P_j^{b_j})\\
&=\deg(X^{\ell^{n-a_j}}-(-1)^{\deg P_j}P_j)\\
&=[k(\sqrt[\ell^{n-a_j}]{(-1)^{\deg P_j}P_j}):k].    
\end{align*}

It follows that $E_j=k(\sqrt[\ell^{n-a_j}]{P_j^*})$. 

Define $M:=E_1\cdots E_r$. We have that $M/k$ is the maximum extension of $E$ unramified on every finite prime and contained in a cyclotomic function field \cite[Proposition 3.3]{MaRzVi2013}. In particular $\g E\subseteq M$. We have that $\p$ is fully ramified in $M/\g E$. 
The field $M$ is known as the {\it narrow genus field} of $E$ and it is denoted by $E_{\eu {gex}}$.
From Abhyankar's Lemma \cite[Theorem 12.4.4]{Vil2006}, we have
\[
e_{\infty}(E_{\eu {gex}}/k)={\rm lcm}[e_{\infty}(E_j/k)\smid 1\leq j\leq r]=:\ell^m.
\] 

Since $\p$ is unramified in the extension $\g E/E$, i.e, $\e{\g E/E}=1$, we have in general $$[E_{\eu {gex}}:\g E]=\e{E_{\eu {gex}}/\g E}\e{\g E/E}=\e{E_{\eu {gex}}/E}.$$

Let $P\in R_T$ and let $F:=k(\sqrt[\ell^n]{(P^{\alpha})^*})$ with $\alpha=b\ell^a$, $\gcd(b,\ell)=1$. We denote by $e_{P}(F/k)$ the  ramification index of the polynomial $P$ in the extension $k(\sqrt[\ell^n]{(P^{\alpha})^*})/k$. Let $e_{\infty}(E/k)$ denote the ramification index of the infinite prime $\p$, in the extension $k(\sqrt[\ell^n]{(P^{\alpha})^*})/k$.

We have the following proposition.

\begin{prop}\label{p1} Let $P\in R_T$ and $k(\sqrt[\ell^n]{(P^{\alpha})^*})=k(\sqrt[\ell^{n-a}]{P^*})$. Then
\begin{gather*}
    e_{P}(k(\sqrt[\ell^n]{(P^{\alpha})^*})/k)=\frac{\ell^n}{\gcd(\alpha,\ell^n)}=\ell^{n-a}
    \quad\textit{ and }\\
    e_{\infty}(k(\sqrt[\ell^n]{(P^{\alpha})^*})/k)
    =\frac{\ell^n}{\gcd(\ell^n,\deg P^\alpha)}
    =\frac{\ell^{n-a}}{\gcd(\ell^{n-a},\deg P)}.
\end{gather*}
\end{prop}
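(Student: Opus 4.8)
The plan is to deduce both equalities from the classical description of tame ramification in Kummer extensions, applied to either of the two presentations
\[
F=k\bigl(\sqrt[\ell^n]{(P^\alpha)^*}\bigr)=k\bigl(\sqrt[\ell^{n-a}]{P^*}\bigr),
\]
and then to reconcile the resulting expressions by an elementary computation with $\ell$--adic valuations. Recall first that, since $\ell^n\smid q-1$, the field $k$ contains the $\ell^n$--th roots of unity, so these are tame Kummer extensions, and it is classical (see, e.g., \cite{Vil2006}) that for every $a\in k^*$ and every place $v$ of $k$,
\[
e_v\bigl(k(\sqrt[\ell^n]{a})/k\bigr)=\frac{\ell^n}{\gcd(\ell^n,v(a))}.
\]
This quantity depends only on $v(a)$ modulo $\ell^n$, hence is unchanged if $a$ is multiplied by an $\ell^n$--th power or by a constant in ${\mathbb F}_q^*$; in particular no hypothesis that $[F:k]=\ell^n$ is needed, and we may pass freely between the two presentations of $F$.

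For the finite prime $P$ (monic irreducible, as the $P_j$ in the construction above, so that $v_P$ is the associated normalized valuation), the factor $(-1)^{\deg P^\alpha}\in{\mathbb F}_q^*$ does not affect the valuation, so $v_P\bigl((P^\alpha)^*\bigr)=v_P(P^\alpha)=\alpha=b\ell^a$. Using $\gcd(b,\ell)=1$ we get $\gcd(\ell^n,\alpha)=\gcd(\ell^n,b\ell^a)=\ell^a$, whence
\[
e_P(F/k)=\frac{\ell^n}{\gcd(\ell^n,\alpha)}=\frac{\ell^n}{\ell^a}=\ell^{n-a};
\]
equivalently, applying the formula to $F=k(\sqrt[\ell^{n-a}]{P^*})$, in which $P^*$ is $\ell$--power free and $v_P(P^*)=1$, gives $e_P(F/k)=\ell^{n-a}/\gcd(\ell^{n-a},1)=\ell^{n-a}$, so that $P$ is in fact totally ramified in $F/k$, in agreement with the discussion preceding the statement.

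For the infinite prime, with the normalization $v_\infty(f)=-\deg f$ for a polynomial $f$ (and $v_\infty(c)=0$ for $c\in{\mathbb F}_q^*$), we have $v_\infty\bigl((P^\alpha)^*\bigr)=-\deg P^\alpha$, so the Kummer formula gives at once
\[
e_\infty(F/k)=\frac{\ell^n}{\gcd(\ell^n,\deg P^\alpha)}.
\]
To match this with $\dfrac{\ell^{n-a}}{\gcd(\ell^{n-a},\deg P)}$, write $\deg P^\alpha=b\ell^a\deg P$ and pull out the power of $\ell$: since $\gcd(b,\ell)=1$,
\[
\gcd(\ell^n,\deg P^\alpha)=\gcd(\ell^n,b\ell^a\deg P)=\ell^a\gcd(\ell^{n-a},b\deg P)=\ell^a\gcd(\ell^{n-a},\deg P),
\]
and therefore $e_\infty(F/k)=\dfrac{\ell^n}{\ell^a\gcd(\ell^{n-a},\deg P)}=\dfrac{\ell^{n-a}}{\gcd(\ell^{n-a},\deg P)}$, as claimed (the same value comes directly from $k(\sqrt[\ell^{n-a}]{P^*})$, where $v_\infty(P^*)=-\deg P$). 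I do not anticipate any genuine difficulty: the only points deserving care are invoking the Kummer ramification formula in the form above — valid without assuming $[F:k]=\ell^n$ thanks to its invariance under $\ell^n$--th powers and unit constants — and respecting the normalization $v_\infty(f)=-\deg f$ at $\p$; everything else is bookkeeping with powers of $\ell$, made routine by the hypothesis $\gcd(b,\ell)=1$.
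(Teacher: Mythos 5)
Your proof is correct. The paper gives no argument of its own for this proposition --- it simply cites \cite[Subsection 5.2]{MaRzVi2016} and \cite[Teorema 10.3.1]{VilCC} --- and your derivation via the tame Kummer ramification formula $e_v=\ell^n/\gcd(\ell^n,v(a))$, applied at $v_P$ and at $v_\infty$ with $v_\infty(f)=-\deg f$, together with the $\gcd$ bookkeeping $\gcd(\ell^n,b\ell^a\deg P)=\ell^a\gcd(\ell^{n-a},\deg P)$, is exactly the standard computation underlying those references; your extra care in checking the formula against the full-degree presentation $k(\sqrt[\ell^{n-a}]{P^*})$ properly handles the fact that $[F:k]$ may be smaller than $\ell^n$.
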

\begin{proof}
See \cite[Subsection 5.2]{MaRzVi2016} or \cite[Teorema 10.3.1]{VilCC}.
\end{proof}

Let $\deg P=c\ell^d$ with $\gcd(c,\ell)=1$. Then $\gcd(\ell^{n-a},\deg P)=\ell^{{\rm min}\{n-a,d\}}$. From Proposition \ref{p1} it follows that
    \begin{gather*}
        e_{\infty}\left( k\left(\sqrt[\ell^{n-a}]{P^*}\right)/k\right)
        =\ell^{n-a-{\rm min}\{n-a,d\}}.
    \end{gather*}
   
Thus 
\begin{gather}\label{divRaM}
    e_{\infty}(E_j/k)=\ell^{n-a_j-{\rm min}\{n-a_j,d_j\}}\smid \ell^m=e_{\infty}(E_{\eu {gex}}/k),\,\, 1\leq j\leq r. 
\end{gather}

Let $K/\F$ be a finite abelian field extension of $k$, where $\p$ is tamely ramified. Let $N\in R_T$ and $u\in{\ma N}$ be such that $\g K{}\subseteq {\lam N}_{u}$.
Let $E_{{\eu {ge}}}$ be the genus field of $E:=\lam N\cap
K_{u}$ and let $E_{{\eu {ge}},u}=E_{{\eu {ge}}}{\ma F}_{q^u}$. 
Let $H$ be the decomposition group of $\S K$, the set of primes in $K$ over $\p$, in $\g E{}K/K$. Let $H'
=H|_{\g E{}}$. 
Then the genus field of $K$ is 
$
K_{{\eu {ge}}}= (\g E{}K)^H=\g E^{H'}K
$
\cite[Theorem 4.2]{MaRzVi2015}.
We have the following diagram
\begin{tiny}
\begin{gather}\label{diag}
\xymatrix{
&k(\Lambda_N)\ar@{-}[rrrrr]\ar@{-}[d]&&&&&k(\Lambda_N)_u\ar@{-}[d]\\
&E_{{\eu {ge}}}\ar@{-}[dl]_{H'=H|_{\g E{}}}\ar@{-}[ddr]\ar@{-}[rrr]&&&
E_{{\eu {ge}}}K\ar@{-}[ddr]\ar@{-}[dl]_H\ar@{-}[rr]\ar@/_2pc/@{-}[ddd]
|!{[dl];[d]}\hole|!{[ddll];[ddr]}\hole
&&K_{{\eu {ge}},u}=E_{{\eu {ge}},u}\ar@{-}[dd]\\
\g E^{H'}\ar@{-}[ddr]\ar@{-}[rrr]|!{[ur];[drr]}\hole
&&&(\g E{}K)^H\ar@{-}[ddr]\ar@{--}[r]&\g K{}
\ar@{-}[dd]|!{[dll];[dr]}\hole\\
&&E\ar@{-}[rrr]|!{[ru];[rrd]}\hole \ar@{-}[dl]&&&EK\ar@{-}[r]\ar@{-}[dl]
&E_u=K_u\ar@{-}[dd]\ar@{-}[dll]\\
&E\cap K\ar@{-}[rrr]\ar@{-}[d]&&&K\\
&k\ar@{-}[rrrrr]&&&&&k_u}
\end{gather}
\end{tiny}

\section{Cyclotomic $\ell^n$ case}\label{S3}
 
We start with the following proposition that bounds the type of ramification in Kummer $\ell^n$-extensions.

\begin{prop}\label{imenosj} Let $P,Q\in R_T^+$ and let $J:=k(\sqrt[\ell^n]{(P^\alpha)^*})$ and $F:=k(\sqrt[\ell^n]{(Q^\beta)^*})$. Suppose that $e_P(J/k)\leq e_Q(F/k)$ and $e_{\infty}(F/k)\leq e_{\infty}(J/k)$ with $1<e_{\infty}(J/k)$. Then $\nu_\ell(\deg P)\leq\nu_\ell(\deg Q)$.
\end{prop}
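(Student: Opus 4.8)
The plan is to translate everything into the explicit exponent data supplied by Proposition \ref{p1} and then reduce the claim $\nu_\ell(\deg P) \le \nu_\ell(\deg Q)$ to a numerical inequality among the integers $a,b,c,d$ (for $P$, with $\alpha = b\ell^a$, $\deg P = c\ell^d$) and $a',b',c',d'$ (for $Q$, with $\beta = b'\ell^{a'}$, $\deg Q = c'\ell^{d'}$). By Proposition \ref{p1} and the remark following it we have $e_P(J/k) = \ell^{n-a}$, $e_Q(F/k) = \ell^{n-a'}$, $e_\infty(J/k) = \ell^{(n-a) - \min\{n-a,\,d\}}$, and $e_\infty(F/k) = \ell^{(n-a') - \min\{n-a',\,d'\}}$. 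What we must show is $d \le d'$, given the three hypotheses: (i) $n-a \le n-a'$, i.e. $a' \le a$; (ii) $(n-a') - \min\{n-a',d'\} \le (n-a) - \min\{n-a,d\}$; and (iii) $n-a > \min\{n-a,d\}$, which forces $d < n-a$ and hence $\min\{n-a,d\} = d$, so $e_\infty(J/k) = \ell^{n-a-d}$ with $n - a - d \ge 1$.

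From (iii) I get the clean identity $e_\infty(J/k) = \ell^{n-a-d}$ and in particular $d < n-a \le n - a'$, so also $\min\{n-a', d'\}$ will be compared against $d'$ once I know $d' < n-a'$; but I should be careful: a priori $d'$ could be $\ge n-a'$, in which case $e_\infty(F/k) = 1$. I would split into these two cases. If $d' \ge n - a'$, then $\min\{n-a',d'\} = n-a'$, so $e_\infty(F/k) = 1 \le e_\infty(J/k)$ holds automatically, and the conclusion $d \le d'$ follows from $d < n - a \le n - a' \le d'$. If instead $d' < n - a'$, then $\min\{n-a',d'\} = d'$, so inequality (ii) reads $(n-a') - d' \le (n - a) - d$, i.e. $d - d' \le (n-a) - (n-a') = a' - a \le 0$ by (i), hence $d \le d'$ again. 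In both cases we conclude $\nu_\ell(\deg P) = d \le d' = \nu_\ell(\deg Q)$.

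The main thing to get right is the bookkeeping: confirming that the hypothesis $e_\infty(F/k) \le e_\infty(J/k)$ is used precisely in the second case (and is vacuous in the first), that $1 < e_\infty(J/k)$ is exactly what licenses replacing $\min\{n-a,d\}$ by $d$, and that $e_P(J/k) \le e_Q(F/k)$ is exactly $a' \le a$. There is no real analytic or algebraic obstacle here — the content of Proposition \ref{p1} has already done the geometric work of computing ramification indices — so the proof is a short case analysis on whether $d' < n-a'$ or $d' \ge n - a'$, and the only risk is mishandling a $\min$ or an inequality direction. I would write it as: reduce to the exponents via Proposition \ref{p1}, observe $\min\{n-a,d\}=d$ from hypothesis $1 < e_\infty(J/k)$, then dispatch the two cases for $d'$ as above.

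\begin{proof}[Proof sketch]
Write $\alpha = b\ell^a$ with $\gcd(b,\ell)=1$, $\deg P = c\ell^d$ with $\gcd(c,\ell)=1$, and similarly $\beta = b'\ell^{a'}$, $\deg Q = c'\ell^{d'}$. By Proposition \ref{p1},
\[
e_P(J/k) = \ell^{n-a}, \qquad e_Q(F/k) = \ell^{n-a'},
\]
and $e_\infty(J/k) = \ell^{(n-a)-\min\{n-a,d\}}$, $e_\infty(F/k) = \ell^{(n-a')-\min\{n-a',d'\}}$. The hypothesis $e_P(J/k) \le e_Q(F/k)$ gives $a' \le a$. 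Since $1 < e_\infty(J/k)$, we have $(n-a) - \min\{n-a,d\} \ge 1$, which forces $\min\{n-a,d\} = d$ and $d < n-a$; hence $e_\infty(J/k) = \ell^{n-a-d}$.

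If $d' \ge n-a'$, then $d < n-a \le n-a' \le d'$, so $\nu_\ell(\deg P) = d \le d' = \nu_\ell(\deg Q)$. If $d' < n-a'$, then $e_\infty(F/k) = \ell^{(n-a')-d'}$, and the hypothesis $e_\infty(F/k) \le e_\infty(J/k)$ gives $(n-a') - d' \le (n-a) - d$, i.e. $d - d' \le a' - a \le 0$, so again $d \le d'$. In either case $\nu_\ell(\deg P) \le \nu_\ell(\deg Q)$.
\end{proof}
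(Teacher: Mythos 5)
Your proof is correct and follows essentially the same route as the paper: both arguments invoke Proposition \ref{p1} to express the ramification indices, use $1<e_{\infty}(J/k)$ to force $\gcd(\deg P,e_P(J/k))=\ell^{\nu_\ell(\deg P)}$, and then combine the two hypotheses into the inequality $\nu_\ell(\deg P)\leq\nu_\ell(\deg Q)$. The only difference is cosmetic: the paper stays multiplicative and absorbs your first case into the single bound $\gcd(\deg Q,e_Q(F/k))\leq\ell^{\nu_\ell(\deg Q)}$, whereas you work additively with an explicit case split on whether $d'<n-a'$.
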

\begin{proof}
Since $e_P(J/k)\leq e_Q(F/k)$, we have $1\leq\frac{e_Q(F/k)}{e_P(J/k)}$. On the other hand, from $e_{\infty}(F/k)\leq e_{\infty}(J/k)$ we obtain
\begin{gather*}
    \frac{e_Q(F/k)}{\gcd(\deg Q, e_Q(F/k))}=e_{\infty}(F/k)\leq 
    e_{\infty}(J/k)=\frac{e_P(J/k)}{\gcd(\deg P, e_P(J/k))}.
\end{gather*}
Since $e_{\infty}(J/k)\neq 1$, it follows that $\nu_\ell(\deg P)< e_P(J/k)$, and $\gcd(\deg P,e_P(J/k))=\ell^{\nu_\ell(\deg P)}$. Also, since $\gcd(\deg Q, e_Q(F/k))\smid \deg Q$, we obtain
\begin{gather*}
    \frac{e_Q(F/k)}{\ell^{\nu_\ell(\deg Q)}}\leq\frac{e_Q(F/k)}{\gcd(\deg Q, e_Q(F/k))}\leq \frac{e_P(J/k)}{\gcd(\deg P, e_P(J/k))}=\frac{e_P(J/k)}{\ell^{\nu_\ell(\deg P)}}.
\end{gather*}

Hence
$$1\leq \frac{e_Q(F/k)}{e_P(J/k)}\leq \ell^{\nu_\ell(\deg Q)-\nu_\ell(\deg P)}.$$ Therefore $0\leq \nu_\ell(\deg Q)-\nu_\ell(\deg P)$, i.e, $\nu_\ell(\deg P)\leq\nu_\ell(\deg Q)$.
\end{proof}

The main result for the Kummer cyclotomic $\ell^n$-cyclic case is the next theorem.

\begin{teo}\label{Pengln} Let $E =k(\sqrt[\ell^n]{D^*})$, with $D=P_1^{\alpha_1}\cdots P_r^{\alpha_r}$, $1\leq \alpha_j\leq \ell^n-1$, $\alpha_j=b_j\ell^{a_j}$ with $\gcd(b_j,\ell)=1$, $1\leq j \leq r$, $P_1,\dots, P_r\in R_T^+$ different with $\deg P_j=c_j\ell^{d_j}$, ${\rm gcd}(c_j,\ell)=1$, $1\leq j\leq r$. We order the polynomials $P_1,\dots,P_r$ such that $0=a_1\leq\cdots\leq a_r\leq n-1$.

Let $E_{\eu {gex}}:=E_1\cdots E_r$ with $E_j=k(\sqrt[\ell^{n-a_j}]{P_j^*})$, $1\leq j\leq r$. Let
\begin{align*}
    e_\infty(E/k)=\ell^t\;\textit{with }\;t&=n-{\rm min}\{n,\nu_\ell(\deg D)\},\\
    &\\
    e_\infty(E_{\eu {gex}}/k)=\ell^m\;\textit{with }\;m&=\underset{1\leq j\leq r}{\rm max}\nu_\ell(e_\infty(E_j/k))\\
    &={\rm max}\{n-a_j-{\rm min}\{n-a_j,d_j\} \smid 1\leq j\leq r\}.
\end{align*}

Let $i_0$, $1\leq i_0\leq r$, be such that $n-a_{i_0}-{\rm min}\{n-a_{i_0},d_{i_0}\}=m$ and $n-a_j-d_j<m$ for $j>{i_0}$.
For $m>0$ we have $\gcd(\deg P_{i_0},\ell^n)=\ell^{d_{i_0}}$, and therefore there exist $a, b\in\Z$ such that $a\,\deg P_{i_0}+b\ell^n=\ell^{d_{i_0}}$. For $j<{i_0}$, we have $d_{i_0}\leq d_j$. Let $z_j:=-ac_j\ell^{d_j-d_{i_0}}$. For $j>{i_0}$, let $y_j\equiv -c_jc_{i_0}^{-1}\mod \ell^{n}\in\Z$.

Then $$\g E=F_1\cdots F_r,$$ where $F_j=E_j$ with $1\leq j\leq r$ if $m=t$, i.e, $\g E=E_{\eu {gex}}$, and if $m>t\geq 0$, then
\begin{gather}\label{knorr}
    F_j:=
    \begin{cases}
    k\left(\sqrt[\ell^{n-a_j}]{P_jP_{i_0}^{z_j}}\right)&\text{if $j<{i_0}$,}\\
    k\left(\sqrt[\ell^{d_{i_0}+t}]{P_{i_0}^*}\right)&\text{if $j={i_0}$,}\\
    k\left(\sqrt[\ell^{n-a_j}]{P_jP_{i_0}^{y_j\ell^{d_j-d_{i_0}}}}\right)&\text{if $j>{i_0}$ and $d_j\geq d_{i_0}$},\\
    k\left(\sqrt[\ell^{n-a_j+d_{i_0}-d_j}]{P_j^{\ell^{d_{i_0}-d_j}}P_{i_0}^{y_j}}\right)&\text{if $j>{i_0}$ and $d_{i_0}> d_j$.}
    \end{cases}
\end{gather}
\end{teo}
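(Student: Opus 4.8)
The plan is to pin down $\g E$ by three invariants inside the narrow genus field $E_{\eu {gex}}=E_1\cdots E_r$ and then check that $F:=F_1\cdots F_r$ has them. Recall from Section \ref{S2} that $\g E\subseteq E_{\eu {gex}}$, that $[E_{\eu {gex}}:\g E]=\e{E_{\eu {gex}}/E}=\ell^{m-t}$, and that $\g E$ is the maximal intermediate field of $E_{\eu {gex}}/E$ in which $\p$ is unramified. Since $E_{\eu {gex}}/E$ is abelian the inertia groups at all primes above $\p$ coincide, so $\g E$ is the \emph{unique} field $L$ with $E\subseteq L\subseteq E_{\eu {gex}}$, $\e{L/k}=\ell^t$ and $[E_{\eu {gex}}:L]=\ell^{m-t}$; it therefore suffices to verify these for $F$. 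If $m=t$ this is vacuous, as $[E_{\eu {gex}}:\g E]=1$ forces $\g E=E_{\eu {gex}}=F_1\cdots F_r$ with each $F_j=E_j$. Assume from now on that $m>t$, so $m>0$, the integers $a,b,z_j,y_j$ of the statement are defined, and $a_{i_0}+d_{i_0}=n-m$ with $d_{i_0}\le n-a_{i_0}$. We use repeatedly that $\mu_{\ell^n}\subseteq\F$, so a radical field $k(\sqrt[\ell^s]{g})$ with $s\le n$, $g\in k^*$, does not depend on the chosen root and its degree over $k$ and ramification at $\p$ are read off from $\nu_P(g)$ and $\deg g$ as in Proposition \ref{p1}; the factors $(-1)^{\deg(\cdot)}$ produced by the $*$--operation are absorbed into $\F^*$ (immediate for $\ell$ odd, and for $\ell=2$ a routine compatibility check that we suppress).

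First, $F\subseteq E_{\eu {gex}}$ and in fact $E_{\eu {gex}}=FE_{i_0}$. Clearly $F_{i_0}\subseteq E_{i_0}$, because $d_{i_0}+t<n-a_{i_0}$. For $j\neq i_0$ the generator of $F_j$ is a product of a root of $P_j$ times a power of a root of $P_{i_0}$, and the exponent of $P_{i_0}$ in the radicand of $F_j$ is divisible by exactly the power of $\ell$ needed for that power of a root of $P_{i_0}$ to lie already in $E_{i_0}=k(\sqrt[\ell^{n-a_{i_0}}]{P_{i_0}^*})$; this divisibility is precisely the content of the inequalities $a_{i_0}+d_{i_0}\le a_j+d_j$ (strict when $j>i_0$) and $d_{i_0}\le d_j$ when $j<i_0$ forced by the choice of $i_0$. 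Hence $F_j\subseteq E_jE_{i_0}\subseteq E_{\eu {gex}}$, and reading the same identity backwards gives $E_j\subseteq F_jE_{i_0}$; therefore $E_{\eu {gex}}=E_1\cdots E_r=FE_{i_0}$.

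Next, $\e{F/k}=\ell^t$. The ramification index of $\p$ divides $\ell^n\smid q-1$, hence is prime to the characteristic, so by Abhyankar's Lemma $\e{F/k}=\lcm_{1\le j\le r}\e{F_j/k}$. For $j\neq i_0$, computing the degree of the radicand of $F_j$ --- using $a\deg P_{i_0}+b\ell^n=\ell^{d_{i_0}}$, equivalently $1-ac_{i_0}=b\ell^{n-d_{i_0}}$, when $j<i_0$, and $c_j+y_jc_{i_0}\equiv0\pmod{\ell^n}$ when $j>i_0$ --- shows it is divisible by $\ell^n$, whence $\e{F_j/k}=1$; this is exactly why $z_j$ and $y_j$ were chosen as in \eqref{knorr}. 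For $j=i_0$, Proposition \ref{p1} gives $\e{F_{i_0}/k}=\ell^{d_{i_0}+t}/\gcd(\ell^{d_{i_0}+t},\deg P_{i_0})=\ell^t$. Thus $\e{F/k}=\ell^t=\e{E/k}$. Moreover $E\subseteq F$: up to a constant $\sqrt[\ell^n]{D^*}=\prod_j(\sqrt[\ell^{n-a_j}]{P_j^*})^{b_j}$, and substituting the generators of the $F_j$ gathers all roots of $P_{i_0}$ into a single power whose exponent, by the same valuation identities, lies modulo $\ell^n$ in that of a power of the generator of $F_{i_0}$; so $\sqrt[\ell^n]{D^*}$ equals an element of $F$ times an $\ell^n$--th power of $k^*$ times a constant, and hence lies in $F$. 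Now $E\subseteq F\subseteq E_{\eu {gex}}$ and $\e{F/k}=\ell^t=\e{E/k}$ force $\p$ to be unramified in $F/E$, so $F\subseteq\g E$ and $[E_{\eu {gex}}:F]\ge\ell^{m-t}$; conversely $E_{\eu {gex}}=FE_{i_0}$ and $F_{i_0}\subseteq F\cap E_{i_0}$ give $[E_{\eu {gex}}:F]=[E_{i_0}:F\cap E_{i_0}]\le[E_{i_0}:F_{i_0}]=\ell^{(n-a_{i_0})-(d_{i_0}+t)}=\ell^{m-t}$. Therefore $[E_{\eu {gex}}:F]=\ell^{m-t}$, and together with $F\subseteq\g E$ and $[E_{\eu {gex}}:\g E]=\ell^{m-t}$ this yields $F=\g E$.

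The heart of the argument, and the only genuinely delicate point, is the arithmetic bookkeeping shared by the three steps above: one must verify, separately in the ranges $j<i_0$, $j>i_0$ with $d_j\ge d_{i_0}$, and $j>i_0$ with $d_{i_0}>d_j$, that the exponents built from the B\'ezout relation $a\deg P_{i_0}+b\ell^n=\ell^{d_{i_0}}$ and from $y_j\equiv-c_jc_{i_0}^{-1}\pmod{\ell^n}$ simultaneously place $F_j$ inside $E_jE_{i_0}$, make the degree of the radicand of $F_j$ divisible by $\ell^n$ for $j\neq i_0$, and allow $\prod_j(\sqrt[\ell^{n-a_j}]{P_j^*})^{b_j}$ to be reabsorbed into $F$. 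Each of these reduces to elementary $\ell$--adic valuation estimates together with $a_{i_0}+d_{i_0}=n-m$, plus the separate sign check when $\ell=2$.
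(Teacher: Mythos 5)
Your argument is correct, and for the hard part of the theorem it takes a genuinely different route from the paper's. The shared core is the choice of $z_j$ and $y_j$ and the verification that the radicand of $F_j$ has degree divisible by $\ell^n$, so that $\e{F_j/k}=1$ for $j\neq i_0$ while $\e{F_{i_0}/k}=\ell^t$; this is exactly the computational heart of the paper's proof as well. Where you diverge is the degree count. The paper shows $L:=\prod_{j\neq i_0}F_j\subseteq\g E$ from ramification bounds at $P_j$, $P_{i_0}$ and $\p$, and then runs a rather long induction on inertia groups (its properties (a.1)--(d.2)) to compute $[L:k]$, identify $L\cap F_{i_0}$, and conclude $[E_{\eu{gex}}:LF_{i_0}]=\ell^{m-t}=[E_{\eu{gex}}:\g E]$. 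You instead observe that $F_j\subseteq E_jE_{i_0}$ and conversely $E_j\subseteq F_jE_{i_0}$ (both reduce to the inequality $a_{i_0}+d_{i_0}\leq a_j+d_j$ forced by the choice of $i_0$), so that $E_{\eu{gex}}=FE_{i_0}$ and $[E_{\eu{gex}}:F]=[E_{i_0}:F\cap E_{i_0}]\leq[E_{i_0}:F_{i_0}]=\ell^{m-t}$; combined with $F\subseteq\g E$ this pins down $F=\g E$ with no induction at all. This is shorter, and it also renders the paper's explicit bounds on $e_{P_{i_0}}(F_j/k)$ unnecessary, since $F\subseteq E_{\eu{gex}}$ is obtained directly by Kummer theory and all finite primes are automatically unramified in $E_{\eu{gex}}/E$. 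Two small points to tighten. First, your verification that $E\subseteq F$ is only sketched, but it is in fact dispensable: replace ``$\p$ is unramified in $F/E$'' by ``$\p$ is unramified in $EF/E$'' (Abhyankar, since $\e{F/k}=\e{E/k}=\ell^t$), which gives $EF\subseteq\g E$, hence $F\subseteq\g E$, and then $E\subseteq F$ follows a posteriori from $F=\g E$. Second, your uniqueness characterization of $\g E$ inside $E_{\eu{gex}}$ by the single invariant $\e{L/k}$ tacitly uses that the residue degree of $\p$ is trivial in subfields of $\lam N$, which is what underlies the paper's own identity $[E_{\eu{gex}}:\g E]=\e{E_{\eu{gex}}/E}$; it is worth saying so explicitly.
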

\begin{proof}
First suppose that $m=t$. Then $$[E_{\eu {gex}}:\g E]=\frac{\e{E_{\eu {gex}}/k}}{\e{E/k}}=\ell^{m-t}=1.$$ It follows that $$E_{\eu {gex}}=\g E.$$
     
Now, suppose that $m>t$. Let $i_0$ be as before. From (\ref{divRaM}) we have
$$e_{\infty}\left( k\left(\sqrt[\ell^{n-a_{i_0}}]{P_{i_0}^*}\right)/k\right)=\eu{\ell^{n-a_{i_0}}}{\gcd(\deg P_{i_0}, \ell^{n-a_{i_0}})}=\ell^m.$$ 
    
Therefore, $\gcd(\deg P_{i_0}, \ell^{n-a_{i_0}})=\ell^{d_{i_0}}$ and $m=n-a_{i_0}-{\rm min}\{n-a_{i_0}, d_{i_0}\}=n-a_{i_0}-d_{i_0}>t\geq0$.
    
Because $\gcd(\deg P_{i_0},\ell^n)=\ell^{d_{i_0}}$, there exist $a,b\in\Z$, such that
\begin{equation}\label{combilin}
        a\deg P_{i_0}+b\ell^n=\ell^{d_{i_0}}.
\end{equation}

In particular $ac_{i_0}+b\ell^{n-d_{i_0}}=1$ and therefore $\gcd(a,\ell)=1$.
From Proposition \ref{imenosj} we have $\ell^{d_{i_0}}\smid \deg P_j$ for $1\leq j\leq {i_0}-1$, so that from (\ref{combilin}) we obtain
    
$$\deg P_j+\left(-a\frac{\deg P_j}{\ell^{d_{i_0}}}\right)\deg P_{i_0}=(b\,\deg P_j)\ell^{n-d_{i_0}}.$$
    
We have $-a\frac{\deg P_j}{\ell^{d_{i_0}}}\in \Z$ with $\gcd(a,\ell)=1$. Let $z_j:=-a\frac{\deg P_j}{\ell^{d_{i_0}}}=-ac_j\ell^{d_j-d_{i_0}}$ and $Q_j:=P_jP_{i_0}^{z_j}$, $1\leq j\leq {i_0}-1$. By construction we have that $\ell^n\smid \deg Q_j$, since $d_j\geq d_{i_0}$. Therefore, $\p$ is unramified at $k\left(\sqrt[\ell^{n-a_j}]{Q_j}\right)/k$, i.e, $e_{\p}\left(k\left(\sqrt[\ell^{n-a_j}]{Q_j}\right)/k\right)=1$, (Proposition \ref{p1}). 
Also, for $j<{i_0}$, we have  
\begin{align}
        e_{P_j}\left(k\left(\sqrt[\ell^{n-a_j}]{Q_j}\right)/k\right)&=\ell^{n-a_j},\label{I}\\
        e_{P_{i_0}}\left(k\left(\sqrt[\ell^{n-a_j}]{Q_j}\right)/k\right)&=\frac{\ell^{n-a_j}}{\gcd(z_j,\ell^{n-a_j})}
        = \ell^{n-a_j-{\rm min}\{n-a_j, d_j-d_{i_0}\}}.\nonumber
\end{align}

 Now $d_j-d_{i_0}\geq {\rm min}\{n-a_j,d_j-d_{i_0}\}$ so that $n-a_j-{\rm min}\{n-a_j,d_j-d_{i_0}\}\leq n-a_j-d_j+d_{i_0}\leq n-a_{i_0}-d_{i_0}+d_{i_0}=n-a_{i_0}$ since $e_\infty(E_j/k)=\ell^{n-a_j-d_j}\smid e_\infty(E_{i_0}/k)=\ell^{n-a_{i_0}-d_{i_0}}$. Therefore
 \begin{equation}
        e_{P_{i_0}}\left(k\left(\sqrt[\ell^{n-a_j}]{Q_j}\right)/k\right)\leq \ell^{n-a_{i_0}}.\label{II}
\end{equation}

Now consider $j>{i_0}$. We have two possibilities: $d_{i_0}\leq d_j$ or $d_{i_0}>d_j$.
 
First suppose that $d_{i_0}\leq d_j$. Consider the polynomial $Q_j:=P_jP_{i_0}^{x_j}$, with $x_j:=y_j\ell^{d_j-d_{i_0}}$. Hence
\begin{align*}
        \deg Q_j&=\deg P_j+x_j\deg P_{i_0}
        =c_j\ell^{d_j}+y_jc_{i_0}\ell^{d_j}.
\end{align*}

Then $\deg Q_j=\ell^{d_j}(c_j+y_jc_{i_0})=A\ell^B$ with $\gcd(A,\ell)=1$. Note that we may choose $B\geq n+d_j$, since this is equivalent to $c_j+y_jc_{i_0}\equiv 0\mod \ell^{n}$, and this is possible because $\gcd(c_{i_0}c_j,\ell)=1$. 
   
Finally, since $\gcd(c_{i_0}c_j,\ell)=1$, we select $y_j\in\Z$ such that $y_j\equiv -c_jc_{i_0}^{-1}\mod \ell^{n}$. Note that in particular, ${\rm gcd}(y_j,\ell)=1$. Thus, define $F_j:=k\left(\sqrt[\ell^{n-a_j}]{Q_j}\right)$. We have $\deg Q_j=A\ell^B$ with $B\geq n$. Note that
\begin{gather}
        e_{P_j}(F_j/k)=e_{P_j}(E_j/k)=\ell^{n-a_j},\label{III}\\ 
\begin{align}        e_{P_{i_0}}(F_j/k)&=\frac{\ell^{n-a_j}}{\gcd(y_j\ell^{d_j-d_{i_0}},\ell^{n-a_j})}=\ell^{n-a_j-{\rm min}\{n-a_j,d_j-d_{i_0}\}}\nonumber\\
&\leq \ell^{n-a_{i_0}}=e_{P_{i_0}}(E_{i_0}/k)
\end{align}\label{IV}\\
{\rm{and}}\quad e_{\infty}\left(F_j/k\right)=\frac{\ell^{n-a_j}}{\gcd(\deg Q_j,\ell^{n-a_j})}=\ell^{n-a_j-n+a_j}=1.\label{V}
\end{gather}

Now, suppose that $d_{i_0}>d_j$. 
Let $Q'_j:=P_j^{\ell^{d_{i_0}-d_j}}P_{i_0}^{y_j}$, with $y_j$ as before, that is, $y_j\equiv -c_jc_{i_0}^{-1}\mod \ell^{n}$, $Q'_j\in R_T$. Define $$F_j:=k\left(\sqrt[\ell^{n-a_j+d_{i_0}-d_j}]{P_j^{\ell^{d_{i_0}-d_j}}P_{i_0}^{y_j}}\right).$$
    
We have 
\begin{align}\label{JGi}
       e_{P_j}(F_j/k)&=\frac{\ell^{n-a_j+d_{i_0}-d_j}}{\gcd(\ell^{d_{i_0}-d_j},\ell^{n-a_j+d_{i_0}-d_j})}
       =\ell^{n-a_j+d_{i_0}-d_j-d_{i_0}+d_j}\nonumber\\
       &=\ell^{n-a_j}=e_{P_j}(E_j/k),
\end{align}
and
\begin{gather*}
       e_{P_{i_0}}(F_j/k)=\frac{\ell^{n-a_j+d_{i_0}-d_j}}{\gcd(y_j,\ell^{n-a_j+d_{i_0}-d_j})}
       =\ell^{n-a_j+d_{i_0}-d_j}.
\end{gather*}

Since $n-a_j-d_j<m=n-a_{i_0}-d_{i_0}$, then $n-a_j-d_j+d_{i_0}<n-a_{i_0}$. It follows that
\begin{equation}
        e_{P_{i_0}}(F_j/k)<\ell^{n-a_{i_0}}=e_{P_{i_0}}(E_{i_0}/k).\label{VI}
\end{equation}

Finally 
\begin{align*}
       \deg Q'_j&=\deg P_j^{\ell^{d_{i_0}-d_j}}P_{i_0}^{y_j}=\ell^{d_{i_0}-d_j}\deg P_j+y_j\deg P_{i_0}\\
       &=\ell^{d_{i_0}-d_j}c_j\ell^{d_j}+y_jc_{i_0}\ell^{d_{i_0}}=\ell^{d_{i_0}}(c_j+y_jc_{i_0})=\ell^{d_{i_0}}(A\ell^{n-d_j})
       =A\ell^{n+d_{i_0}-d_j}.
\end{align*}

We have $A\ell^{n+d_{i_0}-d_j}=A\ell^{n-d_j+d_{i_0}}$, with $\nu_\ell(A)=0$ and $n-d_j+d_{i_0}>n$. It follows that  
\begin{equation}
        e_{\infty}(F_j/k)=\frac{\ell^{n-a_j+d_{i_0}-d_j}}{\gcd(A\ell^{n+d_{i_0}-d_j},\ell^{n-a_j+d_{i_0}-d_j})}=\ell^{n-a_j+d_{i_0}-d_j-n+a_j-d_{i_0}+d_j}=1.\label{VII}
\end{equation}

Let $L:=F_1\cdots F_{{i_0}-1}F_{{i_0}+1}\cdots F_r$. From (\ref{I}) -- (\ref{VII}) we have $L\subseteq \g E$. We will prove that $\g E=LF_{i_0}$, with $F_{i_0}$ as in (\ref{knorr}). Consider the sets 
\[
    \mathcal J=\{j\in\{1,2,\dots,r\}\smid j>{i_0},\;n-a_j-d_j>t\;\textit{ and }\;d_{i_0}> d_j\}
\]
and $\I=\{1,2,\dots,r\}\setminus (\J\cup \{{i_0}\})$. We order the elements of $\J=\{j_1,\dots,j_s\}$ so that $d_{i_0}-d_{j_1}\leq d_{i_0}-d_{j_2}\leq\cdots\leq d_{i_0}-d_{j_s}$. For $j_u\in\J$ we have $F_{j_u}=k(\sqrt[\ell^{n-a_{j_u}+d_{i_0}-d_{j_u}}]{P_{j_u}^{\ell^{d_{i_0}-d_{j_u}}}P_{i_0}^{y_{j_u}}})$.
    
Let $I_{j}$ be the inertia group of the prime $P_{j}$ in $F_{j}/k$ for $1\leq j\leq r$. We have $|I_{j}|=e_{P_j}(F_j/k)=\ell^{n-a_j}$, $j\neq i_0$. Let $F'_{j}:=F_{j}^{I_{j}}$. If $j_u\in\J$ we have $F'_{j_u}=k(\sqrt[\ell^{d_{i_0}-d_{j_u}}]{P_{i_0}^*})$, $1\leq u\leq s$, and $F'_{j_1}\subseteq F'_{j_2}\subseteq \cdots\subseteq F'_{j_s}$. If $j\in\I$ we have $F'_j=k$.
    
For $F_i$ and $F_j$ given as in (\ref{knorr}) with $i\neq j$, we obtain $$\Gal{(F_iF_j/F_i\cap F_j)}\cong \Gal{(F_i/F_i\cap F_j)}\times\Gal{(F_j/F_i\cap F_j)}.$$
    
In addition, we have 
\begin{gather*}
        |\Gal{(F_i/F'_i)}|=|\Gal{(F_i/F_i^{I_i})}|=|I_i|=e_{P_i}(F_i/k)=\ell^{n-a_i},\textit{ and}\\
        |\Gal{(F_j/F'_j)}|=|\Gal{(F_j/F_j^{I_j})}|=|I_j|=e_{P_j}(F_j/k)=\ell^{n-a_j}.
\end{gather*}

Hence $\Gal{(F_iF_j/F_i\cap F_j)}\cong I_i\times I_j.$
    
Now, from Abhyankar's Lemma \cite[Theorem 12.4.4]{Vil2006}, we have
\[
    e_{P_i}(F_iF_j/k)={\rm lcm}[e_{P_i}(F_i/k),e_{P_i}(F_j/k)]={\rm lcm}[e_{P_i}(F_i/k),1]=e_{P_i}(F_i/k)=|I_i|.
\]

Similarly we have $e_{P_j}(F_iF_j/k)=e_{P_i}(F_j/k)=|I_j|$. Let $I'_i$ and $I'_j$ be the inertia groups of $P_i$ and $P_j$ in $F_iF_j/k$ respectively. Then $I_i\times\{e\}<I'_i$ and $\{e\}\times I_j<I'_j$ are such that $|I_i\times\{e\}|=|I'_i|$ and $|\{e\}\times I_j|=|I'_j|$. Therefore, the maximum unramified field subextension at $P_i$ and $P_j$ of $F_iF_j/k$ is $(F_iF_j)^{I_iI_j}=(F_iF_j)^{I_i\times I_j}$.
    
Also note that for  $i,j\in\J$, $i\neq j$, we have that $P_i$ and $P_j$ are unramified in $F_i\cap F_j$. 
Hence $F_i\cap F_j\subseteq F'_i$. Similarly $F_i\cap F_j\subseteq F'_j$. Thus $F_i\cap F_j\subseteq F'_i\cap F'_j\subseteq F_i\cap F_j$. Therefore $F_i\cap F_j= F'_i\cap F'_j$. If we assume that $i<j$, then $F_i\cap F_j=F'_i$. Therefore
\begin{align*}
    [F_iF_j:k]&=[F_iF_j:F_i\cap F_j][F_i\cap F_j:k]=[F_iF_j:F'_i][F'_i:k]\\
    &=[F_iF_j:F'_j][F'_j:F'_i][F'_i:k]=[F_iF_j:F_j][F_j:F'_j][F'_j:k]\\
    &=[F_i:F'_i][F_j:F'_j][F'_j:k].
\end{align*}

Now, say that $\I=\{i_1,\dots,i_{s'}\}$. Using induction it follows that the fields $F_{i_w}$, with $i_w\in\I$, for $1\leq w\leq s'$ satisfy
\begin{itemize}
        \item[a.1.-] $F_{i_1}\cdots F_{i_{w-1}}\cap F_{i_w}=k$,
        \item[b.1.-] $I_{i_1}\cdots I_{i_w}\cong I_{i_1}\times\cdots\times I_{i_w}$,
        \item[c.1.-] $[F_{i_1}\cdots F_{i_w}:k]=\prod\limits_{j=1}^w[F_{i_j}:k]=\prod\limits_{j=1}^w \ell^{n-a_{i_j}}$, 
        \item[d.1.-] $(F_{i_1}\cdots F_{i_w})^{I_{i_1}\cdots I_{i_w}}=k$.
\end{itemize}
    
Similarly, the fields $F_{j_w}$, $1\leq w\leq s$, with $j_w\in\J$, satisfy
\begin{itemize}
    \item[a.2.-] $F_{j_1}\cdots F_{j_{w-1}}\cap F_{j_w}=F'_{j_{w-1}}$,
    \item[b.2.-] $I_{j_1}\cdots I_{j_w}\cong I_{j_1}\times\cdots\times I_{j_w}$,
    \item[c.2.-] $[F_{j_1}\cdots F_{j_w}:k]=(\prod\limits_{n=1}^{w-1}[F_{j_n}:F'_{j_n}])[F'_w:k]=(\prod\limits_{n=1}^{w-1} \ell^{n-a_{j_n}})\ell^{d_i-d_w}$,
    \item[d.2.-] $(F_{j_1}\cdots F_{j_w})^{I_{j_1}\cdots I_{j_w}}=F'_{j_w}$.
\end{itemize}

Let $L=\prod\limits_{i\in\I} F_i \prod\limits_{j\in\J} F_j$. We will see that $(\prod\limits_{i\in\I}F_i)\cap (\prod\limits_{j\in\J} F_j)=k$. Otherwise, let $A\neq k$ be a proper subfield of $\prod\limits_{i\in\I}F_i$. Then at least one $P_i$ with $i\in \I$ should ramify in $A$, since otherwise we would have from (b.1) and (d.1) that $A\subseteq (\prod\limits_{i\in\I}F_i)^{\prod\limits_{i\in\I}I_i}=k$. Therefore, in every nontrivial proper subfield of $\prod\limits_{i\in\I}F_i$ at least one $P_i$ with $i\in\I$ is ramified. Now, in every subfield of $\prod\limits_{j\in\J} F_j$ some $P_j, j\in \mathcal J$ or $P_{i_0}$ is ramified but none of these is ramified in $\prod\limits_{i\in\I} F_i$. It follows that $(\prod\limits_{i\in\I}F_i)\cap (\prod\limits_{j\in\J} F_j)$ does not have proper subfields distinct of $k$. As consequence of (c.1) and (c.2) we obtain
\begin{equation}\label{nueve}
\begin{split}
    [L:k]&=[F_{i_1}\cdots F_{i_{s'}}:k][F_{j_1}\cdots F_{j_s}:k]\\&=\prod\limits_{j=1}^{s'} \ell^{n-a_{i_j}}(\prod\limits_{n=1}^{s-1} \ell^{n-a_{j_n}})\ell^{d_{i_0}-d_s}=(\prod\limits_{\stackrel{j=1}{j\neq {i_0}}}^{r} \ell^{n-a_{j}})\ell^{d_{i_0}-d_s}.
\end{split}
\end{equation}

Next we will see that $L\cap F_{i_0}=F'_s$. Let $C:=L\cap F_{i_0}$. We have $F'_s\subseteq C$. Since $C\subseteq F_{i_0}$, every prime $P_j$ with $1\leq j\leq r$, $j\neq {i_0}$, is unramified in $C$. Let $I=\prod\limits_{\stackrel{j=1}{j\neq {i_0}}}^r I_j$. From (b.1) and (b.2) we obtain $C\subseteq L^I$. From (d.1) and (d.2) it follows
\[
    L^I=(F_{i_1}\cdots F_{i_{s'}})^{I_{i_1}\cdots I_{i_{s'}}}(F_{j_1}\cdots F_{j_s})^{I_{j_1}\cdots I_{j_s}}=F'_s.
\]
    
Therefore $C=L^I=F'_s$.
    
Finally, note that $[M:\g E]=[M:LF_{i_0}]$. By the Galois correspondence we have $[LF_{i_0}:L]=[F_{i_0}:L\cap F_{i_0}]$. Thus 
\begin{align*}
    [F_{i_0}:L\cap F_{i_0}]&=[k(\sqrt[\ell^{d_{i_0}+t}]{P_{i_0}}):k(\sqrt[\ell^{d_{i_0}-d_s}]{P_{i_0}^*})]\\
    &=\frac{[k(\sqrt[\ell^{d_{i_0}+t}]{P_{i_0}}):k]}{[k(\sqrt[\ell^{d_{i_0}-d_s}]{P_{i_0}^*}):k]}=\ell^{d_{i_0}+t-d_{i_0}+d_s}.
\end{align*}

It follows that $[LF_{i_0}:L]=[F_{i_0}:L\cap F_{i_0}]=[F_{i_0}:F'_{i_0}]=\ell^{d_{i_0}+t-d_{i_0}+d_s}=\ell^{d_s+t}$. From equation (\ref{nueve}) we obtain 
\begin{align*}
    [M:LF_{i_0}]&=\frac{[M:k]}{[LF_{i_0}:k]}=\frac{[M:k]}{[LF_{i_0}:L][L:k]}=\frac{\prod\limits_{j=1}^{r} \ell^{n-a_{j}}}{\ell^{d_s+t}(\prod\limits_{\stackrel{j=1}{j\neq {i_0}}}^{r} \ell^{n-a_{j}})\ell^{d_{i_0}-d_s}}
    \\
    &=\frac{\ell^{n-a_{i_0}}}{\ell^{d_{i_0}+t}}=\ell^{n-a_{i_0}-d_{i_0}-t}.
\end{align*}
    
On the other hand, we have $m=n-a_{i_0}-d_{i_0}$. Therefore $[M:LF_{i_0}]=\ell^{m-t}=[M:\g E]$ and by construction we have $L(\sqrt[\ell^{d_{i_0}+t}]{P_{i_0}^*})\subseteq \g E$. It follows that $\g E=L(\sqrt[\ell^{d_{i_0}+t}]{P_{i_0}^*})$.
\end{proof}

\begin{no} {\rm{In the definition of $F_j$ given in \eqref{knorr}, for $j<i_0$ if $1\leq n-a_j-{\rm min}\{n-a_j, d_j\}\leq t$, it can be defined simply $F_j=E_j$.
}}
\end{no}

\section{General $\ell^n$ case}\label{S4}

Recall that $H$ is the decomposition group of $\p$ in $\g EK/K$ (see diagram (\ref{diag})).

\begin{teo}\label{Tprin} Let $K =k(\sqrt[\ell^n]{\gamma D})\subseteq \lam D_u$, with $\gamma\in \F^*$, $D=P_1^{\alpha_1}\cdots P_r^{\alpha_r}$, $1\leq \alpha_j\leq \ell^n-1$, $\alpha_j=b_j\ell^{a_j}$ with $\gcd(b_j,\ell)=1$, $1\leq j \leq r$, $P_1,\dots, P_r\in R_T^+$ different polynomials. We order the polynomials $P_1,\dots,P_r$ so that $0=a_1\leq\cdots\leq a_r\leq n-1$. Let $E=K_u\cap\lam D$, $t$ as in Theorem \ref{Pengln} and $\alpha=\nu_{\ell}(|H|)$. Let $H':=H\smid_{\g E}$. Then $E_{\eu{ge}}^{H'}=F_1\cdots F_{{i_0}-1}F_{{i_0}+1}\cdots F_r (\sqrt[\ell^{d_{i_0}+(t-\alpha)}]{P_{i_0}^*})$ where $F_j$ are given in (\ref{knorr}) for all $j$. Thus
\[
 \g K=E_{\eu{ge}}^{H'}K=\prod_{\stackrel{i=1}{i\neq i_0}}^r F_i K(\sqrt[\ell^{d_{i_0}+(t-\alpha)}]{P_{i_0}^*}).
\]

Further, if $d={\rm{min}}\{n,\nu_\ell(\deg D)\}$, we have
\[
|H|=\ell^\alpha=[\F(\sqrt[\ell^n]{(-1)^{\deg D}\gamma}):\F(\sqrt[\ell^d]{(-1)^{\deg D}\gamma})].
\]
\end{teo}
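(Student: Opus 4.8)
The plan is to start from the reduction $\g K=\g E^{H'}K$ of \cite[Theorem 4.2]{MaRzVi2015}, substitute the description of $\g E$ from Theorem \ref{Pengln}, and then pin down the decomposition group $H$ by a local computation at $\p$. First I would check that $E:=\lam D\cap K_u$ equals $k(\sqrt[\ell^n]{D^*})$: the hypothesis $K\subseteq\lam D_u$ forces the scalar $(-1)^{\deg D}\gamma$ (hence, up to an $\ell^n$-th power when $\ell$ is odd, $\gamma$ itself) to become an $\ell^n$-th power in $\F_{q^u}$, so $K_u=k_u(\sqrt[\ell^n]{D^*})$; since $k(\sqrt[\ell^n]{D^*})/k$ is totally ramified at $P_1$ while $k_u/k$ is everywhere unramified, the two are linearly disjoint over $k$, and as $\lam D$ has exact constant field $\F$ the intersection $\lam D\cap K_u$ collapses to $k(\sqrt[\ell^n]{D^*})$. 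Theorem \ref{Pengln} then gives $\g E=F_1\cdots F_r$ with the $F_j$ of \eqref{knorr} (the case $m=t$ is included, since there $F_{i_0}=E_{i_0}=k(\sqrt[\ell^{d_{i_0}+t}]{P_{i_0}^*})$ as well).

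Next I would determine the shape of $\g E^{H'}$. The key observation is that for every $j\neq i_0$ the radicand defining $F_j$ in \eqref{knorr} is (a power product of monic polynomials and) of degree divisible by the power of $\ell$ that the corresponding radical index divides — this is precisely how $i_0$ and the exponents $z_j,y_j$ were chosen in the proof of Theorem \ref{Pengln} — so its class in $k_\infty^{*}$ is an $\ell^{n-a_j}$-th power, and hence $\p$ splits completely in $F_j/k$. Since $H$ is contained in the decomposition group of $\p$ in $\g EK/k$, its restriction to each such $F_j$ is trivial, so $H'$ fixes $L_0:=\prod_{j\neq i_0}F_j$. Therefore $H'\le\Gal(\g E/L_0)\cong\Gal(F_{i_0}/F_{i_0}\cap L_0)$, this restriction is injective (its kernel is $\Gal(\g E/L_0F_{i_0})=\Gal(\g E/\g E)=1$), and $F_{i_0}=k(\sqrt[\ell^{d_{i_0}+t}]{P_{i_0}^*})$ is cyclic over $k$; writing $|H|=\ell^{\alpha}$ — an $\ell$-power because $\Gal(\g E/k)$ is an $\ell$-group — it follows that $\g E^{H'}=L_0\cdot F_{i_0}^{H'}=L_0\cdot k(\sqrt[\ell^{d_{i_0}+(t-\alpha)}]{P_{i_0}^*})$, which is the first displayed formula, and the second follows by adjoining $K$.

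It then remains to compute $|H|=\ell^\alpha$. I would first show $\g EK/K$ is unramified everywhere: at finite primes this is the base change to $EK\subseteq K_u$ of the unramified extension $\g E/E$ together with the unramified extension $EK/K$, and at $\p$ Abhyankar's Lemma gives $\e{\g EK/k}=\lcm(\e{\g E/k},\e{K/k})=\lcm(\ell^t,\ell^t)=\ell^t=\e{K/k}$. Hence $|H|=\e{\g EK/K}\f{\g EK/K}=\f{\g EK/K}=\f{\g EK/k}/\f{K/k}$, and the question becomes one of residue fields at $\p$. Passing to the completion $k_\infty$ of $k$ at $\p$ and clearing principal units (which are $\ell^n$-th powers in $k_\infty$), the completion of $K$ is $k_\infty(\sqrt[\ell^n]{\gamma\pi^{-\deg D}})$ and — using that $\p$ splits completely in every $F_j$, $j\ne i_0$ — a completion of $\g E$ at a prime over $\p$ is $k_\infty(\sqrt[\ell^{d_{i_0}+t}]{(-1)^{\deg P_{i_0}}\pi^{-\deg P_{i_0}}})$, $\pi$ a uniformizer at $\p$. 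A direct Kummer computation over $k_\infty$ gives $\f{K/k}=[\F(\sqrt[\ell^{d}]{\gamma}):\F]=[\F(\sqrt[\ell^{d}]{(-1)^{\deg D}\gamma}):\F]$ (the two fields coincide: either $\ell$ is odd and $(-1)^{\deg D}$ is an $\ell^n$-th power, or $\deg D$ is odd and then $d=0$). For the compositum, write $\deg D=c\ell^{d}$ and $\deg P_{i_0}=c_{i_0}\ell^{d_{i_0}}$ with $\gcd(c\,c_{i_0},\ell)=1$; the element $\xi:=(\sqrt[\ell^{d_{i_0}+t}]{P_{i_0}^{*}})^{c}(\sqrt[\ell^n]{\gamma D})^{-c_{i_0}}$ is a unit at $\p$, and raising it to $\ell^{\max(n,\,d_{i_0}+t)}$ yields a constant in $\F^{*}$ which, modulo $\ell^n$-th powers, equals $((-1)^{\deg D}\gamma)^{\pm1}$; this is the step where the factor $(-1)^{\deg D}$ enters. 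Consequently the residue field of the completion of $\g EK$ is $\F(\sqrt[\ell^n]{(-1)^{\deg D}\gamma})$, so
\[
|H|=\frac{[\F(\sqrt[\ell^n]{(-1)^{\deg D}\gamma}):\F]}{[\F(\sqrt[\ell^{d}]{(-1)^{\deg D}\gamma}):\F]}=[\F(\sqrt[\ell^n]{(-1)^{\deg D}\gamma}):\F(\sqrt[\ell^{d}]{(-1)^{\deg D}\gamma})],
\]
which also shows $\alpha\le n-d=t$, so the exponent $d_{i_0}+(t-\alpha)$ appearing above is $\ge d_{i_0}\ge 0$, as required.

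The hard part is the last computation in the previous paragraph: controlling $\xi^{\ell^{\max(n,\,d_{i_0}+t)}}$ up to $\ell^n$-th powers, in particular isolating the sign $(-1)^{\deg D}$ when $\ell=2$, and distinguishing the subcases $d\gtrless d_{i_0}$ (and $t=0$) which govern which power of $\xi$ lands in $\F^{*}$. A secondary technical point is to justify that the residue field of the compositum of the two local fields is generated over $\F$ by the residue of $\xi$ together with the residue field of the completion of $K$, i.e. that no further "cross" contribution occurs; this follows from tame Kummer theory over $k_\infty$.
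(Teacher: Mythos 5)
Your architecture matches the paper's --- reduce to $\g K=\g E^{H'}K$, identify $\g E^{H'}$ inside $\g E=F_1\cdots F_r$, then compute $|H|$ as a residue degree at $\p$ --- and your identification of $\g E^{H'}$ is sound: you use complete splitting of $\p$ in each $F_j$, $j\neq i_0$ (which does follow from $\ell^n\smid\deg Q_j$ in the construction of \eqref{knorr}), plus injectivity of restriction to the cyclic $F_{i_0}$, whereas the paper uses cyclicity of $\Gal(\g E/\g E^+)$ and a degree count; these come to the same thing. Where you genuinely diverge is the computation of $|H|$. The paper notes that $EK=K(\sqrt[\ell^n]{(-1)^{\deg D}\gamma})$ is a \emph{constant} extension of $K$ and that $\p$ splits completely in $\g EK/EK$ (since $\S E$ splits completely in $\g E/E$ by definition of the genus field), so $|H|=\f{EK/K}=\f{EK/k}/\f{K/k}$ with both inertia degrees quoted from the literature; the sign $(-1)^{\deg D}$ is already built into the generator of the constant extension. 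You instead compute $\f{\g EK/k}$ by a local Kummer calculation with the unit $\xi$. That route can be completed (e.g.\ for $d\geq d_{i_0}$ the residue of $\xi^{\ell^n}$ is $((-1)^{\deg D}\gamma)^{-c_{i_0}}$ modulo principal units, and $\gcd(c_{i_0},\ell)=1$), but as written it is a plan, not a proof: the step you yourself label ``the hard part'' --- the case split $d\gtrless d_{i_0}$, the sign when $\ell=2$, and the absence of extra residue contributions in the compositum --- \emph{is} the content of the second display, and it is exactly what the constant--extension observation lets the paper bypass. Either carry out that computation in full, or replace it by the observation that $EK/K$ is a constant extension and $\g EK/EK$ is completely decomposed at $\p$.
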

\begin{proof}
From Theorem \ref{Pengln} we have $\e{F_j/k}=1$ for $j\neq i_0$ (i.e, $\e{L/k}=1$). Therefore $e_{\infty}(E_{\eu {ge}}/k)={\rm lcm}[e_{\infty}(F_j/k)\smid 1\leq j\leq r]=e_{\infty}(F_{i_0}/k)=\ell^t.$ That is, the ramification of $\p$ in $\g E/k$ depends only on the ramification of $\p$ in the extension $F_{i_0}/k$. Since $\g E/\g E^+$ is a cyclic extension and $[\g E:\g E^{H'}]=[\g E:F_1\cdots F_{{i_0}-1}F_{{i_0}+1}\cdots F_r (\sqrt[\ell^{d_{i_0}+(t-\alpha)}]{P_{i_0}^*})]$ it follows that 
\[
\g E^{H'}=F_1\cdots F_{{i_0}-1}F_{{i_0}+1}\cdots F_r (\sqrt[\ell^{d_{i_0}+(t-\alpha)}]{P_{i_0}^*}).
\]

We have $EK=K(\sqrt[\ell^n]{(-1)^{\deg D}\gamma})$ and $EK/K$ is unramified, in fact, $EK/K$ is a constant extension \cite[Subsection 5.3]{MaRzVi2016}. From \cite[Theorem 6.2.1]{Vil2006}, we have $$\f{EK/k}=[\F(\sqrt[\ell^n]{(-1)^{\deg D}\gamma}):\F]$$ and from \cite[Proposition 2.8]{BaRzVi2013} we have $$\f{K/k}=[\F(\sqrt[\ell^{d}]{(-1)^{\deg D}\gamma}):\F] \;\;{\rm with}\;\; d={\rm{min}}\{n,\nu_\ell(\deg D)\}.$$ Then $\f{EK/k}=\f{EK/K}\f{K/k}$ where $\f{EK/K}=|H|=\ell^\alpha$. 
$$\xymatrix{
&EK\ar@{-}[dd]^{|H|=\ell^\alpha}& \\
&&\\
& K \ar@{-}[dl]^{\f{K/k}} &&\g K\ar@{-}[ll]_{f_\infty=1} \\
k\ar@/^2pc/@{-}[uuur]^{\f{EK/k}}&&&
}$$

Hence 
\begin{align*}
    |H|&=\ell^\alpha=\frac{\f{EK/k}}{\f{K/k}}=\frac{[\F(\sqrt[\ell^n]{(-1)^{\deg D}\gamma}):\F]}{[\F(\sqrt[\ell^d]{(-1)^{\deg D}\gamma}):\F]}\\
&=[\F(\sqrt[\ell^n]{(-1)^{\deg D}\gamma}):\F(\sqrt[\ell^d]{(-1)^{\deg D}\gamma})].
\end{align*}
\end{proof}

\begin{cor}[Case $n=1$, G. Peng  \cite{MaRzVi2013}]\label{T5.1.6}
Let $K:=k(\sqrt[\ell]{\gamma D})$ with $\gamma\in\F^*$, where $D=P_1^{\alpha_1}\cdots P_r^{\alpha_r}\in R_T$ is a monic $\ell$--power free polynomial, $P_i\in R_T^+$ and $d_j=\nu_\ell(\deg P_j)$, $1\leq \alpha_j\leq \ell-1$, $1\leq j\leq r$. Let $m={\rm max}\{1-{\rm min}\{1,d_j\} \smid 1\leq j\leq r\}$ and $d={\rm{min}}\{1,\nu_\ell(\deg D)\}$. We assume that $m=1-{\rm {min}}\{1,d_r\}$.
Then
\begin{gather}\label{pengnew}
    K_{{\eu {ge}}}=
    \begin{cases}
    E_1\cdots E_rK&\text{if $m=1-d$, $E=K$ or $E\neq K$ and $d=1$}\\
    F_1\cdots F_{r-1}K&\text{if $m>1-d$, $E=K$ or $E\neq K$ and $d=0$ or $1$}\\
    \end{cases},
\end{gather}
where $E_j=k(\sqrt[\ell]{P_j^*})$, $1\leq j\leq r$, $F_j=k(\sqrt[\ell]{P_jP_{r}^{z_j}})$ $1\leq j\leq r-1$, $z_j=-a\deg P_j$, with $a\deg P_r + b\ell=1$, when $d_r=0$ for some $b$.
\end{cor}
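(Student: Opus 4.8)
The plan is to derive Corollary~\ref{T5.1.6} as the special case $n=1$ of Theorem~\ref{Tprin}, so the only real work is to unwind what each piece of the general statement says when the exponent is a prime $\ell$ rather than $\ell^n$. First I would set $n=1$ throughout: then $1\le\alpha_j\le\ell-1$ forces $a_j=\nu_\ell(\alpha_j)=0$ for every $j$, so $\alpha_j=b_j$ with $\gcd(b_j,\ell)=1$, and $E_j=k(\sqrt[\ell^{n-a_j}]{P_j^*})=k(\sqrt[\ell]{P_j^*})$, matching the definition in the Corollary. The quantity $m=\max\{n-a_j-\min\{n-a_j,d_j\}\mid 1\le j\le r\}$ collapses to $\max\{1-\min\{1,d_j\}\mid 1\le j\le r\}$, which is exactly the $m$ in the Corollary, and it takes only the values $0$ or $1$; the ordering hypothesis $0=a_1\le\cdots\le a_r$ is automatic, and after possibly permuting the $P_j$ we may assume the index $i_0$ realizing the maximum is $i_0=r$, as the Corollary does. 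Similarly $t=n-\min\{n,\nu_\ell(\deg D)\}=1-\min\{1,\nu_\ell(\deg D)\}=1-d$, so $t\in\{0,1\}$.

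Next I would split into the two cases of \eqref{pengnew} according to whether $m=t$ or $m>t$. If $m=t=1-d$, Theorem~\ref{Pengln} gives $\g E=E_{\eu{gex}}=E_1\cdots E_r$; I then need $\g K=E_{\eu{ge}}^{H'}K$ to equal $E_1\cdots E_rK$, i.e. that $H'$ is trivial on $\g E$. Here I would invoke the last formula of Theorem~\ref{Tprin}: $|H|=\ell^\alpha=[\F(\sqrt[\ell]{(-1)^{\deg D}\gamma}):\F(\sqrt[\ell^d]{(-1)^{\deg D}\gamma})]$; when $E=K$ we have $\gamma$ an $\ell$-th power up to the sign adjustment so $EK=K$ and $\alpha=0$, and when $E\neq K$ but $d=1$ the two fields in the bracket coincide (both equal $\F(\sqrt[\ell]{(-1)^{\deg D}\gamma})$ since $\ell^d=\ell^1=\ell$), again giving $\alpha=0$; either way $H$ is trivial and the exponent $d_{i_0}+(t-\alpha)=d_r+t-\alpha$ in Theorem~\ref{Tprin} equals $d_r+t$, so the $P_{i_0}$-factor is just $E_{i_0}=E_r$ (using the Remark after Theorem~\ref{Pengln}, since $1-\min\{1,d_r\}=m=t$). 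Hence $\g K=E_1\cdots E_rK$.

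For the case $m>t$, necessarily $m=1$ and $t=0$, so $d=1-t=1$ and $d_r=0$ (since $m=1-\min\{1,d_r\}=1$). Now Theorem~\ref{Pengln} with $i_0=r$ gives $\g E=F_1\cdots F_{r-1}(\sqrt[\ell^{d_r+t}]{P_r^*})=F_1\cdots F_{r-1}(\sqrt[\ell^0]{P_r^*})=F_1\cdots F_{r-1}$, because $d_r=t=0$. I would check that the $F_j$ in \eqref{knorr} reduce to $k(\sqrt[\ell]{P_jP_r^{z_j}})$ with $z_j=-ac_j\ell^{d_j-d_r}=-ac_j\ell^{d_j}=-a\deg P_j$ and $a\deg P_r+b\ell=\ell^{d_r}=1$: indeed for $j<i_0=r$ the first branch of \eqref{knorr} applies (the branches for $j>i_0$ are vacuous since $i_0=r$), giving $F_j=k(\sqrt[\ell^{n-a_j}]{P_jP_{i_0}^{z_j}})=k(\sqrt[\ell]{P_jP_r^{z_j}})$, exactly the Corollary's $F_j$. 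Since $d=1$, the bracket formula again forces $\alpha=0$, so $E_{\eu{ge}}^{H'}=\g E=F_1\cdots F_{r-1}$ (the $P_{i_0}$-factor has exponent $\ell^{d_r+t-\alpha}=\ell^0$, i.e. is $k$), whence $\g K=F_1\cdots F_{r-1}K$.

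I expect the main obstacle to be purely bookkeeping: making sure that when $n=1$ the exponent $d_{i_0}+(t-\alpha)$ in Theorem~\ref{Tprin} is handled correctly in every subcase, in particular verifying $\alpha=0$ uniformly. The subtle point is that in all four listed cases the Corollary has $d\in\{0,1\}$, but whenever $d=0$ (which happens precisely when $m>1-d$ forces $d=1$ is false, i.e. in the line $m=1-d$ with $d=0$), one must re-examine whether $\alpha$ could be nonzero; however in that subcase $m=1-d=1$ contradicts $m=1-\min\{1,d_r\}$ unless $d_r=0$, and then $d=\min\{1,\nu_\ell(\deg D)\}$ being $0$ means $\ell\nmid\deg D$, so $\F(\sqrt[\ell^d]{\cdot})=\F(\sqrt[\ell^0]{\cdot})=\F$ and the bracket could a priori be $\ell$; but the hypothesis ``$E=K$'' in that branch forces $\gamma$ to be an $\ell$-th power times $(-1)^{\deg D}$, killing the extension, so $\alpha=0$ again. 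Once these case distinctions are laid out cleanly, the Corollary follows immediately from Theorems~\ref{Pengln} and~\ref{Tprin}.
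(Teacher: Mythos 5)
Your overall strategy is the same as the paper's: specialize Theorems \ref{Pengln} and \ref{Tprin} to $n=1$, note $a_j=0$ and $i_0=r$, and compute $\alpha$ from the bracket $[\F(\sqrt[\ell]{(-1)^{\deg D}\gamma}):\F(\sqrt[\ell^{d}]{(-1)^{\deg D}\gamma})]$. The first three subcases ($m=t$ with $E=K$; $m=t$ with $E\neq K$, $d=1$; and $m>t$, which indeed forces $t=0$, $d=1$, $d_r=0$ and hence $F_r=k(\sqrt[\ell^{d_r+t}]{P_r^*})=k$) are handled correctly and exactly as in the paper.

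There is, however, a genuine gap: you have argued yourself out of the one subcase where the decomposition group actually matters, namely $E\neq K$ and $d=0$ (equivalently $(-1)^{\deg D}\gamma\notin(\F^*)^\ell$ and $\ell\nmid\deg D$). This subcase is explicitly listed in the second line of \eqref{pengnew} (``$E\neq K$ and $d=0$ or $1$''), but in your final paragraph you claim that $d=0$ can only occur together with the hypothesis $E=K$, and on that basis you conclude that ``$\alpha=0$ uniformly.'' That is false. When $E\neq K$ and $d=0$ one has $t=1-d=1$, hence $m=t=1$ (so this is \emph{not} subsumed by your $m>t$ analysis), $d_r=0$, and the bracket is $[\F(\sqrt[\ell]{\epsilon}):\F]=\ell$ because $\epsilon=(-1)^{\deg D}\gamma$ is not an $\ell$-th power; so $\alpha=1$ and $H'$ is nontrivial. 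Here $\g E=E_{\eu{gex}}=E_1\cdots E_r$ by Theorem \ref{Pengln}, yet $\g K=\g E^{H'}K$ is strictly smaller than $\g EK$: by Theorem \ref{Tprin} the $P_r$-factor is $\sqrt[\ell^{\,d_r+t-\alpha}]{P_r^*}=\sqrt[\ell^{0}]{P_r^*}$, i.e.\ trivial, and $\g E^{H'}=F_1\cdots F_{r-1}$ with the $F_j$ of \eqref{knorr}, giving $\g K=F_1\cdots F_{r-1}K$. This is precisely the case the paper's proof treats separately (its case $b)$ with $d=0$), and it is the only place where the passage from the cyclotomic field $E$ to $K$ changes the answer; your proof as written omits it. To repair the argument you must drop the claim that $\alpha=0$ always, compute $\alpha=1$ in this subcase, and invoke the $E_{\eu{ge}}^{H'}$ formula of Theorem \ref{Tprin} (with the $F_j$ of \eqref{knorr}, not the $E_j$) to identify the fixed field.
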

\begin{proof}
Writing $\alpha_j=b_j\ell^{a_j}$, $1\leq j\leq r$, we have $0=a_1=\cdots=a_r$.
Let $i_0$ be as in Theorem \ref{Tprin}, that is, $i_0$, $1\leq i_0\leq r$, is such that $n-a_{i_0}-{\rm min}\{n-a_{i_0},d_{i_0}\}=m$. Then $i_0=r$.

On the other hand we have two cases. If we write $\epsilon:=(-1)^{\deg D}\gamma$, then
\begin{itemize}
    \item[$a)$\,] $\epsilon\in (\F^*)^\ell$,    
    \item[$b)$\,] $\epsilon\notin (\F^*)^\ell$.
\end{itemize}

{\large $a)$} If $K=E$ (see \cite[Theorem 4.2]{MaRzVi2015}). Therefore $\g K=\g E$.  
If $m=t$, then $\g E=E_1\cdots E_r$, with $E_j=k(\sqrt[\ell]{P_j^*})$, $1\leq j\leq r$, so that $\g K=E_1\cdots E_r$. If $m>t$, we have $m=1$ and $t=0$ and from Theorem $\ref{Pengln}$ we have $\g E=F_1\cdots F_r$, where if $z_j:=-ac_j\ell^{d_j-d_{r}}$ then
\begin{gather}\label{knorPeng}
    F_j:=
    \begin{cases}
    k\left(\sqrt[\ell]{P_jP_{r}^{z_j}}\right)&\text{if $j<{r}$,}\\
    k\left(\sqrt[\ell^{d_{r}+t}]{(-1)^{\deg P_r}P_{r}}\right)&\text{if $j={r}$}.
    \end{cases}
\end{gather}
    
Note that $d_r=0$ and since $t=0$, we obtain $F_r=k$. Hence $\g K=F_1\cdots F_{r-1}$.

{\large $b)$} We have that $K\neq E$ and that $\g K=E_{\eu{ge}}^{H'}K$ (where $H$ is as in Theorem \ref{Tprin}), with $\g E$ as in case $a)$. We also have $|H'|=\ell^\alpha=[\F(\sqrt[\ell]{(-1)^{\deg D}\gamma}):\F(\sqrt[\ell^d]{(-1)^{\deg D}\gamma})]$, where $\alpha= 0$ or $1$, that is $|H'|=1$ or $\ell$.

If $d=1$, then $|H'|=1$, that is $\alpha=0$. Thus $\g K=\g EK$, with $\g E$ as in $a)$. Therefore $\g K=E_1\cdots E_rK$ if $m=t$ and $\g K=F_1\cdots F_{r-1}K$ if $m>t$ with $F_j$ as in (\ref{knorPeng}).

If $d=0$, we have $|H'|=\ell$, that is $\alpha=1$. Since $d=1-t$, we have $1=t\leq m\leq 1$. Therefore $\ell\nmid \deg P_r$, that is $d_r=0$. From Theorem \ref{Tprin} and from $d_r+t-\alpha=0$, we obtain
\[
\g E^{H'}=F_1\cdots F_{r-1} (\sqrt[\ell^{d_{i_0}+(t-\alpha)}]{P_{r}^*})=F_1\cdots F_{r-1}.
\]

Therefore $\g K=F_1\cdots F_{r-1}K$.
\end{proof}

\begin{ejm} {\rm{Let $K=\F(\sqrt[\ell^n]{\gamma D})$ with $\gamma=5$, $\ell = 3$, $n=10$, $q=472393$ and $D=P_1^{\alpha_1}P_2^{\alpha_2}P_3^{\alpha_3}P_4^{\alpha_4}$ $P_5^{\alpha_5}P_6^{\alpha_6}P_7^{\alpha_7}P_8^{\alpha_8}$, where $\alpha_j=b_j\ell^{a_j}$, $\deg P_j=c_j\ell^{b_j}$, $1\leq j\leq 8$. Let
$a_1=0$, $a_2=1$, $a_3=3$, $a_4=3$, $a_5=4$, $a_6=7$, $a_7=8$ and $a_8=9$. Let $d_1=5$, $d_2=7$, $d_3=2$, $d_4=3$, $d_5=2$, $d_6=0$, $d_7=10$ and $d_8=0$. Since ${\rm gcd}(b_jc_j,\ell)=1$, $1\leq j\leq 8$, we can select $c_1=2$, $b_1=b_2=b_4=b_5=b_6=b_7=b_8=c_2=c_4=c_6=c_7=c_8=1$ and $b_3=c_3=c_5=5$. We have $m={\rm max}\{n-a_j-{\rm min}\{n-a_j,d_j\} \smid 1\leq j\leq r\}={\rm max}\{5,2,5,4,4,3,0,1\}=5$ and $t=10-{\rm min}\{10,\nu_\ell(\deg D)\}$ where $\deg D=\ell^5(\ell^3(b_2c_2+b_7c_7\ell^{10}+b_8c_8\ell)+b_1c_1+b_3c_3+\ell(b_4c_4+b_5c_5+b_6c_6\ell))$, and $\nu_\ell(b_1c_1+b_3c_3+\ell(b_4c_4+b_5c_5+b_6c_6\ell))=3$. Thus $\nu_\ell(\deg D)=8$, $\deg D=387459855$ and $t=2$. We have $i_0=3$, $F_3=k\left(\sqrt[\ell^{4}]{P_3^*}\right)$ and
\begin{align*}
F_1&=k\left(\sqrt[\ell^{10}]{P_1P_3^{z_1}}\right),\; F_2=k\left(\sqrt[\ell^{9}]{P_2P_3^{z_2}}\right),\; F_4=k\left(\sqrt[\ell^{7}]{P_4P_{3}^{y_4\ell}}\right),\;\\ F_5&=k\left(\sqrt[\ell^{6}]{P_5P_{3}^{y_5\ell^0}}\right),\; F_6=k\left(\sqrt[\ell^{5}]{P_6^{\ell^{2}}P_{3}^{y_6}}\right),\; F_7=k\left(\sqrt[\ell^{2}]{P_7P_{3}^{y_7\ell^{8}}}\right),\\ F_8&=k\left(\sqrt[\ell^{3}]{P_8^{\ell^{2}}P_{3}^{y_8}}\right),
\end{align*}
with $z_1=-2a\ell^3$, $z_2=-a\ell^5$ and $y_j\equiv -c_j5^{-1}\mod \ell^{10}=-c_j11810\mod 3^{10}$, $4\leq j\leq 8$, where $a5+b\ell^{8}=1$. We may choose $y_4=y_6=y_7=y_8=47239$, $y_5=59048$, $a=-1312$ and $b=1$.
Therefore
\begin{align*}
 \g K=K(\sqrt[3^{10}]{P_1P_3^{70848}}, &\sqrt[3^{9}]{P_2P_3^{318816}},\sqrt[3^{7}]{P_4P_{3}^{141717}},\sqrt[3^{6}]{P_5P_{3}^{59048}},\\&\sqrt[3^{5}]{P_6^{9}P_{3}^{47239}},\sqrt[3^{2}]{P_7P_{3}^{309935079}},\sqrt[3^{3}]{P_8^{9}P_{3}^{47239}}, \sqrt[3^{4-\alpha}]{P_{3}^*}).    
\end{align*}
with
\[
|H'|=3^\alpha=[\F(\sqrt[3^{10}]{-\gamma}):\F(\sqrt[3^8]{-\gamma})].
\]

From \cite[Theorem 4 (1)]{ExtEulerCrit}, $5\notin(\F^*)^\ell$ and $[\F(\sqrt[3^{10}]{-\gamma}):\F(\sqrt[3^8]{-\gamma})]=3^2$. Therefore $\alpha=2$ and $\sqrt[3^{4-\alpha}]{P_{3}^*}=\sqrt[3^{2}]{P_{3}^*}$.
}}
\end{ejm}

\section{On the field extension  $(K_1K_2)_{\eu{ge}}/(K_1)_{\eu{ge}}(K_2)_{\eu{ge}}$}\label{S5}

Let $K_1$ and $K_2$ be function fields over $k=\F(T)$. Let $K:=K_1K_2$ and suppose that $K\subseteq {_n}{\lam N}_m$. Let $E:=E_1E_2$, with $E_i:={_n}(K_i)_m\cap \lam N$, $i=1,2$. We have $E={_n}K_m\cap \lam N$.

\begin{prop}\label{prop5} Let $L_1,L_2\subseteq\lam N$, $L:=L_1L_2$, $L^+:=L\cap\lam N^+$ and $L_i^+:=L_i\cap\lam N^+$, $i=1,2$. Then $[L^+:(L_1)^+(L_2)^+]\smid q-1$.
\end{prop}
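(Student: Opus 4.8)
The plan is to work inside the cyclotomic function field $\lam N$, whose Galois group over $k$ is $(R_T/N)^*$, and to use the fact that the maximal real subfield $\lam N^+$ is precisely the fixed field of the subgroup $\F^*$ sitting inside $(R_T/N)^*$ via the Carlitz module (the decomposition/inertia group of $\p$). So I would first let $G=\Gal(\lam N/k)\cong (R_T/N)^*$, identify $\Delta:=\F^*\hookrightarrow G$ as the piece responsible for the infinite prime, and note $\Gal(\lam N/\lam N^+)=\Delta$. Translating the statement through the Galois correspondence: if $H_1,H_2\le G$ are the subgroups with fixed fields $L_1,L_2$, then $L=L_1L_2$ corresponds to $H_1\cap H_2$, $L_i^+=L_i\cap\lam N^+$ corresponds to $H_i\Delta$ (the subgroup generated by $H_i$ and $\Delta$), $L^+$ corresponds to $(H_1\cap H_2)\Delta$, and $(L_1)^+(L_2)^+$ corresponds to $(H_1\Delta)\cap(H_2\Delta)$. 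Hence what must be shown is
\[
[L^+:(L_1)^+(L_2)^+]=\bigl[(H_1\Delta)\cap(H_2\Delta):(H_1\cap H_2)\Delta\bigr]\;\Big|\;q-1=|\Delta|.
\]

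Since $G$ is abelian, all these subgroups commute and the relevant computation is pure abelian group theory. The key step is to produce a well-defined injective homomorphism from the quotient $\bigl((H_1\Delta)\cap(H_2\Delta)\bigr)\big/\bigl((H_1\cap H_2)\Delta\bigr)$ into $\Delta$ (or into a subquotient of $\Delta$), which immediately gives that the index divides $|\Delta|=q-1$. Concretely, given $x\in (H_1\Delta)\cap(H_2\Delta)$, write $x=h_1\delta_1=h_2\delta_2$ with $h_i\in H_i$, $\delta_i\in\Delta$; then $h_1h_2^{-1}=\delta_2\delta_1^{-1}\in\Delta$, and I would send the class of $x$ to the class of $\delta_1$ (equivalently of $\delta_2$) in $\Delta/(\Delta\cap H_1 H_2)$ or a similar target. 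One checks: (i) changing the decomposition $x=h_i\delta_i$ only changes $\delta_i$ by an element of $\Delta\cap H_i$, so the image is well defined modulo $\Delta\cap H_1H_2$; (ii) if $x\in(H_1\cap H_2)\Delta$ then one may take $h_1=h_2\in H_1\cap H_2$ and $\delta_1=\delta_2$, so the image is trivial; (iii) the map kills exactly $(H_1\cap H_2)\Delta$, giving injectivity of the induced map on the quotient. Therefore the index divides $|\Delta/(\Delta\cap H_1H_2)|$, which divides $|\Delta|=q-1$.

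The main obstacle — really the only nonroutine point — is item (iii), i.e.\ verifying that the kernel of $x\mapsto [\delta_1]$ is precisely $(H_1\cap H_2)\Delta$ and not something larger; this is where one genuinely uses commutativity of $G$, since if $\delta_1\in\Delta\cap H_1H_2$, say $\delta_1=h_1'h_2'$ with $h_i'\in H_i$, then from $x=h_1\delta_1$ one rearranges $x=(h_1 h_1')h_2'$ and, pairing with the other representation $x=h_2\delta_2$, deduces after a short manipulation that $x$ lies in $(H_1\cap H_2)\Delta$. I would write this computation out carefully using that all factors commute. An alternative, perhaps cleaner, route I would keep in reserve is to compute orders directly: using $|AB|=|A||B|/|A\cap B|$ repeatedly for the commuting subgroups $H_1,H_2,\Delta$, one gets
\[
\frac{|(H_1\Delta)\cap(H_2\Delta)|}{|(H_1\cap H_2)\Delta|}
\]
expressed entirely in terms of the orders of $H_1,H_2,\Delta,H_1\cap H_2,H_1H_2$, and one checks the resulting rational number is an integer dividing $|\Delta|$; this avoids constructing the map explicitly but requires the same commutativity input to guarantee the product formulas apply. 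Either way, once the group-theoretic inequality is in hand, translating back through the Galois correspondence yields $[L^+:(L_1)^+(L_2)^+]\mid q-1$.
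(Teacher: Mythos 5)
Your reduction via the Galois correspondence is exactly the paper's setup (the paper works with $G_i=\Gal(\lam N/L_i)$ and the subgroup $\F^*$ of the abelian group $\Gal(\lam N/k)$), and the route you keep ``in reserve'' --- expanding $[(H_1\Delta)\cap(H_2\Delta):(H_1\cap H_2)\Delta]$ by repeated use of $|AB|=|A||B|/|A\cap B|$ and reading off that the result is $q-1$ divided by a positive integer --- is precisely the proof the paper gives. That route is sound and, if you carry out the computation, finishes the argument.

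Your primary argument, however, contains a genuine error. The map $x=h_1\delta_1=h_2\delta_2\mapsto[\delta_1]\in\Delta/(\Delta\cap H_1H_2)$ does not kill $(H_1\cap H_2)\Delta$: for $x=\delta\in\Delta$ the image is $[\delta]$, which is nontrivial whenever $\delta\notin H_1H_2$, so your step (ii) fails and the map does not descend to the quotient. Moreover, no injection into that target can exist in general: the order count shows that
$[(H_1\Delta)\cap(H_2\Delta):(H_1\cap H_2)\Delta]=[\Delta\cap H_1H_2:(\Delta\cap H_1)(\Delta\cap H_2)]$,
and this can strictly exceed $|\Delta/(\Delta\cap H_1H_2)|$; for instance, with $H_1=\langle(1,0,0)\rangle$, $H_2=\langle(0,1,0)\rangle$, $\Delta=\langle(1,1,0)\rangle$ inside $(\Z/2)^3$, the left-hand index is $2$ while $\Delta/(\Delta\cap H_1H_2)$ is trivial. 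The repair is to send $x=h_1\delta_1=h_2\delta_2$ to the class of $\delta_1\delta_2^{-1}=h_1^{-1}h_2$ in $(\Delta\cap H_1H_2)/\bigl((\Delta\cap H_1)(\Delta\cap H_2)\bigr)$: this is well defined, has kernel exactly $(H_1\cap H_2)\Delta$, and is surjective, giving an isomorphism of the quotient with a subquotient of $\Delta$ and hence the divisibility by $q-1$. With that correction --- or by simply executing your order count as the paper does --- the proof is complete.
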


\begin{proof}
Let $G:=\Gal(\lam N/E)$, $G_1:=\Gal(\lam N/L_1)$ and $G_2:=\Gal(\lam N/L_2)$. We have $G=G_1\cap G_2$. Let $L^+=L\cap\lam N^+=\lam N^{G\F^*}$, $(L_1)^+=L_1\cap\lam N^+=\lam N^{G_1\F^*}$ and $(L_2)^+=L_2\cap\lam N^+=\lam N^{G_2\F^*}$. Since $G\F^*<G_1\F^*\cap G_2\F^*$, we have 
\[
L_1^+L_2^+= \lam N^{G_1\F^*}\lam N^{G_2\F^*}= \lam N^{G_1\F^*\cap G_2\F^*}\subseteq \lam N^{G\F^*}=L^+.
\]

$$\xymatrix{
\lam N^+\ar@{-}[r]^{\F^*}&\lam N\ar@{-}[d]^{G}\ar@/^1pc/@{--}[ddr]^{G_2}&\\
&L=L_1L_2\ar@{-}[dl]\ar@{-}[dr]& \\
L_1\ar@/^1pc/@{--}[uur]^{G_1} && L_2
}$$

Thus
\begin{align*}
    [L^+:L_1^+L_2^+]&=|G_1\F^*\cap G_2\F^*:G\F^*|
    =\frac{|G_1\F^*\cap G_2\F^*|}{|G\F^*|}\\
    &=\frac{\frac{|G_1\F^*||G_2\F^*|}{|G_1G_2\F^*|}}{\frac{|G||\F^*|}{|G\cap\F^*|}}
    =\frac{|G_1\F^*||G_2\F^*||G\cap\F^*|}{(q-1)|G||G_1G_2\F^*|}\\
    &=\frac{|G_1||\F^*||G_2||\F^*||G\cap\F^*|}{(q-1)|G||G_1\cap\F^*||G_2\cap\F^*||G_1G_2\F^*|}\\
    &=(q-1)\frac{|G_1||G_2|}{|G|}\frac{|G\cap \F^*|}{|G_1\cap \F^*||G_2\cap \F^*|}\frac{1}{|G_1G_2\F^*|}\\
    &=(q-1)\frac{|G_1||G_2|}{|G_1\cap G_2|}\frac{1}{|G_1G_2\F^*|}\frac{|G\cap \F^*|}{|G_1\cap \F^*||G_2\cap \F^*|}\\
    &=(q-1)\frac{|G_1 G_2|}{|G_1G_2\F^*|}\frac{|G\cap \F^*|}{|G_1\cap \F^*||G_2\cap \F^*|}\\
    &=(q-1)\frac{1}{|(G_1\cap \F^*)(G_2\cap \F^*)|}\frac{1}{[L_1\cap L_2:L_1^+\cap L_2^+]}.
\end{align*}

Let $\alpha:=|(G_1\cap \F^*)(G_2\cap \F^*)|[L_1\cap L_2:L_1^+\cap L_2^+]$. Since $[L^+:L_1^+L_2^+]\in\Z$, we have $\alpha\smid q-1$, and $[L^+:L_1^+L_2^+]\smid q-1$.
\end{proof}

\begin{prop}\label{prop6}
Let $L\subseteq\lam N$. If $\lam N^+\subseteq L\subseteq\lam N$, then $\g L=L$.
\end{prop}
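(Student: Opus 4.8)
The plan is to exploit the fact that $\lam N/k$ is an abelian extension in which the only primes that ramify are the finite prime(s) dividing $N$ and the infinite prime $\p$, and that $\p$ is tamely ramified with ramification index exactly $q-1$ in $\lam N/\lam N^+$ (indeed $\lam N^+$ is by definition the maximal real subfield, i.e. the decomposition field — in fact the inertia field — of $\p$). Recall from the setup in Section \ref{S2} that the genus field $\g L$ (with respect to $S=\S L$) is the maximal unramified extension of $L$ inside $L_{H,S}$ in which the primes over $\p$ split completely; in particular $\g L$ is contained in the compositum $L\cdot k^\ast$ with $k^\ast/k$ abelian, so $\g L\subseteq\lam M$ for a suitable modulus, and $\g L/L$ is unramified at every finite prime and $\p$ is fully decomposed in $\g L/L$.

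First I would reduce to showing $\g L\subseteq L$, since the reverse inclusion $L\subseteq\g L$ is immediate from the definition. So suppose $F$ is an intermediate field $L\subseteq F\subseteq\g L$ with $F/L$ unramified everywhere (including at $\p$, where moreover $\p$ splits completely) and $F/k$ abelian. The key point is that $\lam N^+\subseteq L$ forces $\p$ to be unramified in $L/\lam N^+$ with a specific behavior: since $e_\infty(\lam N/\lam N^+)=q-1$ and ramification indices are multiplicative in towers, $e_\infty(\lam N/L)\cdot e_\infty(L/\lam N^+)=q-1$, and because $\p$ is fully decomposed in $\g L/L$ we also get $e_\infty(\g L/\lam N^+)=e_\infty(L/\lam N^+)$. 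Now enlarge by composing with $\lam N$: the field $F\lam N$ is abelian over $k$, unramified over $\lam N$ at all finite primes (as $F/L$ is and $\lam N$ already realizes the full tame ramification at the finite primes of $L$ over $\Q$-side — more precisely, Abhyankar's Lemma \cite[Theorem 12.4.4]{Vil2006} gives $e_P(F\lam N/\lam N)=1$ for every finite prime $P$, since $e_P(F/k)\smid e_P(\lam N/k)$ because $\lam N$ contains the full inertia at $P$ among subfields of cyclotomic function fields). Similarly Abhyankar's Lemma at $\p$ gives that $F\lam N/\lam N$ is unramified at $\p$ as well.

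Therefore $F\lam N$ is an abelian extension of $k$, unramified over $\lam N$ at every prime of $\lam N$ — finite and infinite. But $\lam N$ is a cyclotomic function field, so its genus field relative to $S_\infty$ is $\lam N$ itself (this is the function-field analogue of the classical fact that cyclotomic fields have trivial genus/class group contribution from ramification; concretely, by \cite[Proposition 3.3]{MaRzVi2013} the narrow genus field $E_{\eu{gex}}$ of $\lam N$ equals $\lam N$ since $\lam N$ is already the maximal such extension), and in particular there is no nontrivial abelian extension of $k$ that is unramified over $\lam N$ everywhere. Hence $F\lam N=\lam N$, i.e. $F\subseteq\lam N$. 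Combined with $L\subseteq F$ and $\lam N^+\subseteq L$, and using that $F/L$ must be unramified at $\p$ while the only subextensions of $\lam N/\lam N^+$ are totally (tamely) ramified at $\p$ except the trivial one, we conclude $F=L$. Thus $\g L=L$.

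The step I expect to be the main obstacle is the careful verification that $F\lam N/\lam N$ is unramified at \emph{every} prime — one must check simultaneously that the finite-prime ramification of $F/k$ is already absorbed by $\lam N$ (needing that $\lam N$ carries the maximal inertia at finite primes among cyclotomic subfields, which is where the hypothesis $\g L\subseteq$ cyclotomic is used) and that the tame ramification at $\p$ behaves correctly under the compositum, the delicate case being when $\p$ is already ramified in $L/\lam N^+$; here one invokes Abhyankar's Lemma together with the fact that $e_\infty(\lam N/k)$ is the least common multiple of all relevant infinite ramification indices, so nothing new ramifies upstairs. Once this is in place, the "cyclotomic fields are their own genus fields" input finishes the argument cleanly.
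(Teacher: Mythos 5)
Your overall strategy (reduce to showing $\g L\subseteq\lam N$, then use that every intermediate field of $\lam N/\lam N^+$ is totally ramified at $\p$) is workable, but the pivotal step ``hence $F\lam N=\lam N$'' has a genuine gap. From ``$F\lam N$ is abelian over $k$ and unramified over $\lam N$ at every prime'' you cannot conclude $F\lam N=\lam N$: the constant field extensions $\lam N{\ma F}_{q^m}$ are abelian over $k$ and unramified everywhere, so your intermediate claim that ``there is no nontrivial abelian extension of $k$ that is unramified over $\lam N$ everywhere'' is simply false. This is not a technicality --- it is exactly the phenomenon that forces Rosen's definition of the Hilbert class field (and hence of $\g L$) to require that the primes in $S_\infty$ split completely, as the paper's introduction stresses. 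To close the gap you must additionally check that the primes of $\lam N$ above $\p$ split completely in $F\lam N/\lam N$; this does follow from $F\subseteq\g L$ (the decomposition group of a prime of $F\lam N$ in $\Gal(F\lam N/L)$ restricts trivially to $F$ because $\p$ splits completely in $F/L$, hence it lies in $\Gal(F\lam N/F)\cap\Gal(F\lam N/\lam N)=1$), but it is precisely the verification your write-up omits, and without it the constants are not excluded. A secondary problem: your appeal to Abhyankar's Lemma at the finite primes is not justified when $P^2\smid N$, since then $P$ is wildly ramified in $\lam N/k$ and Abhyankar's Lemma needs tameness on at least one side; the correct (and easier) route is the same inertia-group computation in the compositum, using only that $F/L$ is unramified at every finite prime.

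For comparison, the paper's proof is two lines: $L_{\eu{gex}}=\lam N$, and by \cite[Theorem 2.1]{RockM} one has $\g L=L_{\eu{gex}}^+L=\lam N^+L=L$. All the bookkeeping you carry out by hand --- unramifiedness at finite primes, behaviour at $\p$, and the exclusion of constant extensions --- is packaged into that single formula, so once you repair the splitting-at-infinity step your argument amounts to an unwinding of it.
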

\begin{proof}
We have $L_{\eu{gex}}=\lam N$ and from \cite[Theorem 2.1]{RockM} it follows that $\g L=L_{\eu{gex}}^+L=\lam N^+L=L$.
\end{proof}

Let $E=E_1E_2\subseteq \lam N$. Let $Y_1$ and $Y_2$ be the groups of Dirichlet characters associated with $(E_1)_{\eu {gex}}$ and $(E_2)_{\eu {gex}}$ respectively. Then, from Leopoldt's Theorem \cite[Proposition 14.4.1]{VilCC}, we have that $Y=Y_1Y_2$ is the group of characters associated with $E_{\eu {gex}}$, where $Y_1Y_2$ is the group of Dirichlet characters associated with the field $(E_1)_{\eu {gex}}(E_2)_{\eu {gex}}$ \cite[Proposition 9.4.33]{VilCC}, i.e, $E_{\eu {gex}}=(E_1)_{\eu {gex}}(E_2)_{\eu {gex}}$. From Proposition \ref{prop5} we obtain that $[E_{\eu {gex}}^+:(E_1)_{\eu {gex}}^+(E_2)_{\eu {gex}}^+]\smid q-1$.

We have the following result.

\begin{prop} Let $K_1,K_2\subseteq {_n}{\lam N}_m$, $K=K_1K_2$ and $E_1$, $E_2$ as before. If $\g K=(K_1)_{\eu {ge}}(K_2)_{\eu {ge}}$, then $\g E=(E_1)_{\eu {ge}}(E_2)_{\eu {ge}}$.
\end{prop}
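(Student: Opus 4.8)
The plan is to deduce the statement about $E$ from the statement about $K$ by relating the genus fields of $K_i$ to those of $E_i$ via the diagram \eqref{diag} and the structure theorem $\g K=E_{\eu{ge}}^{H'}K$. First I would set up notation: write $K=K_1K_2\subseteq{}_n{\lam N}_m$, and for $i=1,2$ let $E_i={}_n(K_i)_m\cap\lam N$, so that $E=E_1E_2={}_n K_m\cap\lam N$. Recall from \cite[Theorem 4.2]{MaRzVi2015} (quoted before diagram \eqref{diag}) that $(K_i)_{\eu{ge}}=(E_i)_{\eu{ge}}^{H_i'}K_i$ where $H_i$ is the decomposition group of $S_\infty(K_i)$ in $(E_i)_{\eu{ge}}K_i/K_i$ and $H_i'=H_i|_{(E_i)_{\eu{ge}}}$; similarly $\g K=E_{\eu{ge}}^{H'}K$. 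The key point is that the passage from the cyclotomic layer $E_i$ to $K_i$ is a \emph{constant field extension} (this is exactly the content of the paragraph after diagram \eqref{diag} and of \cite[Subsection 5.3]{MaRzVi2016} used in Theorem \ref{Tprin}): $E_iK_i=(K_i)_u$ is a constant extension of $K_i$, and likewise $EK$ is a constant extension of $K$.

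The main step is to show that $\g K=(K_1)_{\eu{ge}}(K_2)_{\eu{ge}}$ forces $E_{\eu{ge}}^{H'}=(E_1)_{\eu{ge}}^{H_1'}(E_2)_{\eu{ge}}^{H_2'}$, and then to remove the decomposition-group superscripts. For the first part I would intersect the hypothesized equality $E_{\eu{ge}}^{H'}K=(E_1)_{\eu{ge}}^{H_1'}(E_2)_{\eu{ge}}^{H_2'}K_1K_2$ with the cyclotomic field $\lam N$ (or with an appropriate constant-extension-free layer): since the extensions $E_{\eu{ge}}^{H'}$, $(E_i)_{\eu{ge}}^{H_i'}$ all live inside $\lam N$ while $K$ differs from $E$ only by constants, intersecting with $\lam N$ kills the constant part and yields $E_{\eu{ge}}^{H'}=(E_1)_{\eu{ge}}^{H_1'}(E_2)_{\eu{ge}}^{H_2'}$. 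Here one must be careful that $(E_i)_{\eu{ge}}^{H_i'}$ is genuinely contained in $\lam N$ and that the intersection of a composite $AK$ with $\lam N$ is $A$ when $A\subseteq\lam N$ and $K\cap\lam N=E$ with $A\supseteq$ the relevant part of $E$ — this is where a clean lemma on "$A(BK)\cap\lam N=AB$ for $A,B\subseteq\lam N$" is needed and follows from $K_u=EK$ being a constant extension together with linear disjointness of geometric and constant extensions over $E$.

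Next I would argue that once the $H_i'$-fixed subfields agree under composition, the full genus fields agree: $\g E=E_{\eu{ge}}$, $(E_i)_{\eu{ge}}$ are abelian (cyclotomic) and the decomposition groups $H'$, $H_i'$ of the infinite prime are controlled by the degrees $[\F(\sqrt[\ell^n]{\cdot}):\F(\sqrt[\ell^d]{\cdot})]$ computed in Theorem \ref{Tprin}; since $E$, $E_1$, $E_2$ are cyclotomic (so $\g{(E_i)}=(E_i)_{\eu{ge}}$ directly, with no constant obstruction, and in fact the decomposition groups are trivial in the cyclotomic setting because $\p$ is unramified in $\g{(E_i)}/E_i$ and the relevant $\gamma$-constant is a perfect power), the superscripts are trivial and $E_{\eu{ge}}^{H'}=E_{\eu{ge}}=\g E$, $(E_i)_{\eu{ge}}^{H_i'}=(E_i)_{\eu{ge}}=\g{(E_i)}$. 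Combining, $\g E=(E_1)_{\eu{ge}}(E_2)_{\eu{ge}}$, which is exactly the claim.

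The hard part will be the intersection-with-$\lam N$ step: I must verify precisely that composing with $K$ (a constant extension of $E$) and then intersecting back with $\lam N$ is the identity on subfields of $\lam N$ containing the appropriate sub-layer of $E$, which requires the linear disjointness of the geometric tower (inside $\lam N\subseteq\cicl N{}$ suitably) from the constant extensions — available from \cite[Subsection 5.3]{MaRzVi2016} and \cite[Proposition 2.8]{BaRzVi2013} quoted in the proof of Theorem \ref{Tprin} — together with a careful tracking of the decomposition groups $H$, $H_1$, $H_2$ of $\p$, which by Theorem \ref{Tprin} are determined by constant-field data and hence behave well under the constant extension. A secondary subtlety is ensuring $[E_{\eu{ge}}^{H'}:(E_1)_{\eu{ge}}^{H_1'}(E_2)_{\eu{ge}}^{H_2'}]=1$ rather than merely dividing $q-1$; this is forced by the hypothesis on $K$, which on restriction gives equality rather than just divisibility.
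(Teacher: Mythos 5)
There is a genuine gap, and it is not in the step you flagged as hard but in the one you treated as easy: the removal of the decomposition-group superscripts. You claim that $H'$ and the $H_i'$ are trivial because ``the relevant $\gamma$-constant is a perfect power.'' That is false in general. The group $H$ is the decomposition group of $\S K$ in $\g EK/K$, and by Theorem \ref{Tprin} its order is $[\F(\sqrt[\ell^n]{(-1)^{\deg D}\gamma}):\F(\sqrt[\ell^d]{(-1)^{\deg D}\gamma})]$, which is nontrivial precisely when $\gamma$ is not a suitable power; case $b)$ with $d=0$ in the proof of Corollary \ref{T5.1.6} has $|H'|=\ell$, and nothing in the hypothesis $\g K=(K_1)_{\eu{ge}}(K_2)_{\eu{ge}}$ forces these groups to vanish. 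Consequently, even if your intersection argument delivered $E_{\eu{ge}}^{H'}=(E_1)_{\eu{ge}}^{H_1'}(E_2)_{\eu{ge}}^{H_2'}$, you would be comparing proper subfields of the genus fields and would have no route to the desired conclusion $\g E=(E_1)_{\eu{ge}}(E_2)_{\eu{ge}}$. The intersection step itself is also shakier than you allow: $H'$ fixes only $\g E\cap K$, not $E$ (see diagram \eqref{diag}), so $E\not\subseteq E_{\eu{ge}}^{H'}$ when $H\neq 1$, and the identity $BK\cap\lam N=B$ cannot be justified by linear disjointness over $E$ for the fields $B$ you need it for.

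The idea you are missing is to pass to the constant field extension $(\cdot)_m$ rather than trying to kill the decomposition groups. By \cite[Theorem 2.2]{RockM}, the discrepancy between $\g K$ and $\g E$ (and between $(K_i)_{\eu{ge}}$ and $(E_i)_{\eu{ge}}$) is exactly a constant field extension, so that $(\g E)_m=(\g K)_m$ and $((E_i)_{\eu{ge}})_m=((K_i)_{\eu{ge}})_m$ for $i=1,2$. The hypothesis then gives $(\g E)_m=((K_1)_{\eu{ge}}(K_2)_{\eu{ge}})_m=((E_1)_{\eu{ge}})_m((E_2)_{\eu{ge}})_m=((E_1)_{\eu{ge}}(E_2)_{\eu{ge}})_m$, with no superscripts anywhere; since $\g E$ and $(E_1)_{\eu{ge}}(E_2)_{\eu{ge}}$ both lie in $\lam N$, which is linearly disjoint from the constant extensions, intersecting with $\lam N$ recovers $\g E=(E_1)_{\eu{ge}}(E_2)_{\eu{ge}}$ by the Galois correspondence. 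Your instinct that everything is controlled by constant field extensions is correct; it just has to be applied to the genus fields themselves at the level of $(\cdot)_m$, rather than to the fixed fields $E_{\eu{ge}}^{H'}$ at the level of $K$.
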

\begin{proof}
Let $K=K_1K_2\subseteq {_n\lam N_m}$ and $E:=E_1E_2 \subseteq \lam N$. Consider the fields $(\g K)_m$ and $(\g E)_m$. Now, from  \cite[Theorem 2.2]{RockM} we have $(\g E)_m=(\g K)_m$ and $((K_i)_{\eu {ge}})_m=((E_i)_{\eu {ge}})_m$, $i=1,2$. Therefore
\begin{align*}
(\g E)_m=(\g K)_m&=((K_1)_{\eu {ge}}(K_2)_{\eu {ge}})_m=((K_1)_{\eu {ge}})_m((K_2)_{\eu {ge}})_m\\
&=((E_1)_{\eu {ge}})_m((E_2)_{\eu {ge}})_m
=((E_1)_{\eu {ge}}(E_2)_{\eu {ge}})_m.
\end{align*}

Finally, by the Galois correspondence, it follows
\begin{align*}
    \g E=(\g E)_m\cap\lam N=((E_1)_{\eu {ge}}(E_2)_{\eu {ge}})_m\cap\lam N=(E_1)_{\eu {ge}}(E_2)_{\eu {ge}}.
\end{align*}
\end{proof}

The converse of Proposition \ref{prop6} does not hold in general, that is, if $\g E=(E_1)_{\eu{ge}}(E_2)_{\eu{ge}}$, then the equality $\g K=(K_1)_{\eu{ge}}(K_2)_{\eu{ge}}$ may fail.

\begin{ejm} {\rm{Let $K_1=k(\sqrt[\ell]{P_1})$, $K_2=k(\sqrt[\ell]{\gamma P_2})$ and $K=K_1K_2$ be such that $P_1, P_2\in R_T^+$ are different polynomials with $\deg P_1=a$, $1\leq a <\ell$, $\deg P_2=\ell-a$ and $\gamma\notin(\F^*)^\ell$. Then we have $E_i=k(\sqrt[\ell]{(P_i)^*})$, $i=1,2$ and $E=E_1E_2$. If $\chi_{P_i}$ is the Dirichlet character associated with the field $E_i$, $i=1,2$, by Leopoldt's Theorem (\cite[Proposition 14.4.1]{VilCC}), we have $E_i=(E_i)_\eu{ge}$, $i=1,2$ and $E_{\eu {ge}}=E_1E_2=E$. Therefore $\g E=E=E_1E_2=(E_1)_\eu{ge}(E_2)_\eu{ge}$.

Now, since $\p$ is ramified in $K_i/k$, $i=1,2$, we have that $\p$ is of degree $1$ in $K_i$ and $(K_i)_\eu{ge}=K_i$.
On the other hand, from Abhyankar's Lemma, the remification of $\p$ in $K/k$ is equal to ${\rm lcm}[e_\infty(K_1/k),e_\infty(K_2/k)]=\ell$. Since $k(\sqrt[\ell]{\gamma P_1P_2})\subseteq K$, with $\deg P_1P_2=\deg P_1+\deg P_2=\ell$, thus, $\ell\smid \deg P_1P_2$ and $\gamma\notin(\F^*)^\ell$, we obtain $f_\infty(K/k)=\ell$. Also we have $[K:k]=\ell^2=f_\infty(K/k) e_\infty(K/k)$. It follows that $h_\infty(K/k)=1$ and $\deg(S_\infty(K))=f_\infty(K/k)=\ell$. Thus $[K{\ma F}_{q^\ell}:K]=\ell$ and $K\subsetneq K{\ma F}_{q^\ell}\subseteq \g K$, i.e, $\g K\neq K=K_1K_2=(K_1)_\eu{ge}(K_2)_\eu{ge}$.
}}
\end{ejm}

The main result of this section is the following theorem.

\begin{teo} Let $K_1,K_2/k$ be abelian finite field extensions. Then
$$[(K_1K_2)_{\eu{ge}}:(K_1)_{\eu{ge}}(K_2)_{\eu{ge}}]\smid (q-1)^2.$$
\end{teo}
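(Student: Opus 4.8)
The plan is to reduce the general statement to the cyclotomic situation handled by Proposition \ref{prop5} together with the structural description of $\g K$ from \cite[Theorem 4.2]{MaRzVi2015}, and to account separately for the two independent contributions of $q-1$ described in the introduction. First I would choose $N \in R_T$ and $n,m \in \N$ large enough that $K = K_1K_2 \subseteq {_n}{\lam N}_m$, and set $E_i := {_n}(K_i)_m \cap \lam N$, $E := E_1E_2 = {_n}K_m \cap \lam N$, so that all the genus fields in sight are governed by the group of Dirichlet characters associated with $E_{\eu{gex}}$, $(E_1)_{\eu{gex}}$, $(E_2)_{\eu{gex}}$. By Leopoldt's Theorem (\cite[Proposition 14.4.1]{VilCC}) and \cite[Proposition 9.4.33]{VilCC} we already know $E_{\eu{gex}} = (E_1)_{\eu{gex}}(E_2)_{\eu{gex}}$, and from Proposition \ref{prop5} applied to $L_i = (E_i)_{\eu{gex}}$ inside $\lam N$ we get $[E_{\eu{gex}}^+:(E_1)_{\eu{gex}}^+(E_2)_{\eu{gex}}^+] \smid q-1$. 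This is the first factor of $q-1$: it measures how much the maximal-real (equivalently, the $\p$-decomposed) part of the composite can exceed the composite of the maximal-real parts, i.e. the tame ramification of $\p$ that shifts to decomposition upon composing.

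Next I would pass from $E_{\eu{gex}}$ down to $\g E$. Recall $\g E = E_{\eu{gex}}^+ \cdot E$ by \cite[Theorem 2.1]{RockM}, and likewise $(E_i)_{\eu{ge}} = (E_i)_{\eu{gex}}^+ \cdot E_i$. Since $E = E_1E_2$, the composite $(E_1)_{\eu{ge}}(E_2)_{\eu{ge}} = (E_1)_{\eu{gex}}^+(E_2)_{\eu{gex}}^+ \cdot E$, and therefore
\[
[\g E : (E_1)_{\eu{ge}}(E_2)_{\eu{ge}}] = [E_{\eu{gex}}^+ E : (E_1)_{\eu{gex}}^+(E_2)_{\eu{gex}}^+ E] \smid [E_{\eu{gex}}^+ : (E_1)_{\eu{gex}}^+(E_2)_{\eu{gex}}^+] \smid q-1.
\]
So the first factor of $q-1$ already controls the cyclotomic (inside-$\lam N$) discrepancy completely: $[\g E : (E_1)_{\eu{ge}}(E_2)_{\eu{ge}}] \smid q-1$.

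Then I would bring in the constant-field and decomposition-group data. By \cite[Theorem 4.2]{MaRzVi2015} we have $\g{K} = \g E^{H'} K$ where $H$ is the decomposition group of $\S K$ in $\g E K/K$ and $H' = H|_{\g E}$; similarly $(K_i)_{\eu{ge}} = (E_i)_{\eu{ge}}^{H_i'} K_i$ with $H_i'$ the analogous restricted decomposition group. The inclusion $(K_1)_{\eu{ge}}(K_2)_{\eu{ge}} \subseteq \g K$ is the known one; to bound the index I would compare $\g K$ with the intermediate field $((E_1)_{\eu{ge}}(E_2)_{\eu{ge}})^{?} K_1 K_2$ obtained by using $\g E \supseteq (E_1)_{\eu{ge}}(E_2)_{\eu{ge}}$ (index dividing $q-1$, by the previous paragraph) and the fact that the subgroup fixing $K = K_1K_2$ inside the relevant Galois group is cut out, up to the decomposition behavior of $\p$, by the subgroups fixing $K_1$ and $K_2$ separately. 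Since $\p$ is tamely ramified and the ``various decomposition groups'' of the structure theorem (the $I_j$, $H$ of Theorems \ref{Pengln} and \ref{Tprin}) are each cyclic of order dividing $q-1$ in the part that can differ between $K$ and $K_1, K_2$, the extra discrepancy coming from passing from $\g E^{H'}$-type fixed fields to the composite of the $(E_i)_{\eu{ge}}^{H_i'}$-type fixed fields divides a second factor $q-1$. Multiplying the two bounds gives $[(K_1K_2)_{\eu{ge}} : (K_1)_{\eu{ge}}(K_2)_{\eu{ge}}] \smid (q-1)^2$.

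The main obstacle is the last step: making precise and clean the claim that the decomposition-group discrepancy — the difference between $\g E^{H'}K$ and $\big((E_1)_{\eu{ge}}(E_2)_{\eu{ge}}\big)^{(H_1'\cap H_2')\text{-type}}K_1K_2$ — is itself bounded by $q-1$ and is genuinely independent of the first factor. Concretely, one must track how $H$ (the decomposition group of $\p$ in $\g E K/K$) relates to $H_1$ and $H_2$, show that $H/(H_1 \cap H_2)$-type quotients embed into $\F^*$ (tame ramification, cyclic decomposition of $\p$ in a composite of two fields forces the comparison group to be cyclic of exponent dividing $q-1$), and combine this with the constant-extension parts $K_i \subseteq (K_i)_m$ so that no factor is double-counted. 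Once that lemma is isolated — essentially a second, ``multiplicative'' copy of the Proposition \ref{prop5} computation but for decomposition groups rather than for $\F^*$ inside $\Gal(\lam N/\cdot)$ — the theorem follows by multiplying the two indices, and the stated corollary ($\gcd([K_i:k], q-1) = 1 \Rightarrow$ equality) drops out since then both factors are trivial.
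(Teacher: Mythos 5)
Your first factor of $q-1$ is handled essentially as in the paper: reduce to $E=E_1E_2\subseteq\lam N$, use Leopoldt's theorem to get $E_{\eu{gex}}=(E_1)_{\eu{gex}}(E_2)_{\eu{gex}}$, apply Proposition \ref{prop5} to bound $[E_{\eu{gex}}^+:(E_1)_{\eu{gex}}^+(E_2)_{\eu{gex}}^+]$, and descend via $\g E=E_{\eu{gex}}^+E$ to $[\g EK:(E_1)_{\eu{ge}}(E_2)_{\eu{ge}}K]\smid q-1$. That part is correct and matches the paper's proof.

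The gap is the second factor, and you flag it yourself as "the main obstacle": you never prove the lemma that the discrepancy between $\g E^{H'}K$ and the composite of the $(E_i)_{\eu{ge}}^{H_i'}K_i$ divides $q-1$. As sketched it is also not the comparison the paper makes: relating $H$ to $H_1\cap H_2$ inside a common Galois group is delicate (these are decomposition groups over the different base fields $K$, $K_1$, $K_2$), and the paper only proves the relation $|H'|\smid\lcm[|H_1'|,|H_2'|]$ in a separate, later theorem --- a statement that by itself does not yield an index bound. The paper's actual proof sidesteps the fixed fields under $H'$ entirely: since $\g K\subseteq\g EK$, it suffices to bound $[\g EK:(K_1)_{\eu{ge}}(K_2)_{\eu{ge}}]$, which factors through the intermediate field $(E_1)_{\eu{ge}}(E_2)_{\eu{ge}}K$. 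The top step is your first factor. For the bottom step one invokes \cite[Theorem 2.2]{RockM}: $(E_i)_{\eu{ge}}K_i=(K_i)_{\eu{ge}}{\ma F}_{q^{t_i|H_i|}}$ is a \emph{constant} extension of $(K_i)_{\eu{ge}}$ of degree $|H_i|\smid q-1$, where $t_i=\deg\S{K_i}$. Composing, $(E_1)_{\eu{ge}}(E_2)_{\eu{ge}}K=(K_1)_{\eu{ge}}(K_2)_{\eu{ge}}{\ma F}_{q^{\lcm[t_1|H_1|,\,t_2|H_2|]}}$, and since ${\ma F}_{q^{\lcm[t_1,t_2]}}\subseteq(K_1)_{\eu{ge}}(K_2)_{\eu{ge}}$ while $\lcm[t_1|H_1|,t_2|H_2|]\smid\lcm[t_1,t_2](q-1)$, this is a constant extension of degree dividing $q-1$. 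That is the precise form of the second lemma your proposal is missing; without it (or an equivalent argument), the proof does not close.
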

\begin{proof}
We have $K_1,K_2\subseteq {_n\lam N_m}$ for some $m\in\N$, $n\in\N\cup \{0\}$ and $N\in R_T$. Let $K=K_1K_2\subseteq {_n\lam N_m}$. Let $E_i={_n}(K_i)_m\cap \lam N$, $i=1,2$ y $E={_n}K_m\cap\lam N$. Then $E=E_1E_2$. Now, since $E_{\eu{gex}}^+\cap\g E=E_{\eu{gex}}\cap\lam N^+\cap\g E=E_{\eu{gex}}\cap\g E^+=\g E^+$, we have the following Galois square.
\[
\xymatrix{
E_{\eu{gex}}^+\ar@{-}[d]\ar@{-}[r]&E_{\eu{gex}}^+\g E\ar@{-}[d] \\
E_{\eu{ge}}^+\ar@{-}[r]&\g E
}
\]

From \cite[Theorem 2.1]{RockM} we have $\g E=E_{\eu{gex}}^+ E$. Then from the Galois correspondence we obtain $\g E^+=E_{\eu{gex}}^+$. Similarly $(E_i)_{\eu{ge}}^+=(E_i)_{\eu{gex}}^+$, $i=1,2$. We have $[E_{\eu {gex}}^+:(E_1)_{\eu {gex}}^+(E_2)_{\eu {gex}}^+]\smid q-1$. In particular $[E_{\eu {ge}}^+:(E_1)_{\eu {ge}}^+(E_2)_{\eu {ge}}^+]=[E_{\eu {gex}}^+:(E_1)_{\eu {gex}}^+(E_2)_{\eu {gex}}^+]\smid q-1$. Finally, we obtain the following diagram.
\[
\xymatrix{
E_{\eu{ge}}^+\ar@{-}[ddd]\ar@{-}[r]&E_{\eu{ge}}\ar@{-}[dd]\ar@{-}[r]&\g EK\ar@{-}[d]\\
&&(E_1)_{\eu{ge}}(E_2)_{\eu{ge}}K\\
&(E_1)_{\eu{ge}}(E_2)_{\eu{ge}}\ar@{-}[ru]&\\
(E_1)_{\eu{ge}}^+(E_2)_{\eu{ge}}^+\ar@{-}[ru]&&
}
\]

It follows that
\[
[\g EK:(E_1)_{\eu {ge}}(E_1)_{\eu {ge}}K]\smid [E_{\eu {ge}}:(E_1)_{\eu {ge}}(E_2)_{\eu {ge}}]\smid [E_{\eu {gex}}^+:(E_1)_{\eu {ge}}^+(E_2)_{\eu {ge}}^+].
\]

Therefore $[\g EK:(E_1)_{\eu {ge}}(E_1)_{\eu {ge}}K]\smid q-1$.

On the other hand, from \cite[Theorem 2.2]{RockM} we have that the following extensions $(E_i)_{\eu {ge}}K_i/(K_i)_{\eu {ge}}$, $i=1,2$, are constant extension fields of order $|H_i|\smid q-1$, $i=1,2$, were $H_i$ is the decomposition group of $S_\infty(K_i)$ in $(E_i)_{\eu {ge}}K_i$, $i=1,2$. Also, if $t_i:=\deg S_\infty(K_i)$, then $(E_i)_{\eu {ge}}K_i=(K_i)_{\eu {ge}}{\ma F}_{q^{t_i|H_i|}}$, $i=1,2$. It follows that 
\begin{align*}
    (E_1)_{\eu {ge}}K_1(E_2)_{\eu {ge}}K_2&=(K_1)_{\eu {ge}}{\ma F}_{q^{t_1|H_1|}}(K_2)_{\eu {ge}}{\ma F}_{q^{t_2|H_2|}}\\
    &=(K_1)_{\eu {ge}}(K_2)_{\eu {ge}}{\ma F}_{q^{{\rm lcm}[t_1|H_1|,t_2|H_2|]}}\\
    &\subseteq (K_1)_{\eu {ge}}(K_2)_{\eu {ge}}{\ma F}_{q^{{\rm lcm}[t_1(q-1),t_2(q-1)]}}\\
    &= (K_1)_{\eu {ge}}(K_2)_{\eu {ge}}{\ma F}_{q^{{\rm lcm}[t_1,t_2](q-1)}}. 
\end{align*}

We have ${\ma F}_{q^{{\rm lcm}[t_1,t_2]}}\subseteq (K_1)_{\eu {ge}}(K_2)_{\eu {ge}}$, i.e, the field $(K_1)_{\eu {ge}}(K_2)_{\eu {ge}}{\ma F}_{q^{{\rm lcm}[t_1,t_2](q-1)}}$ is an extension of constants of $(K_1)_{\eu {ge}}(K_2)_{\eu {ge}}$ of degree at most $q-1$. We have
\[
(K_1)_{\eu {ge}}(K_2)_{\eu {ge}}\subseteq (E_1)_{\eu {ge}}(E_2)_{\eu {ge}}K\subseteq (K_1)_{\eu {ge}}(K_2)_{\eu {ge}}{\ma F}_{q^{{\rm lcm}[t_1,t_2](q-1)}}.
\]

Therefore, $(E_1)_{\eu {ge}}(E_2)_{\eu {ge}}K/(K_1)_{\eu {ge}}(K_2)_{\eu {ge}}$ is an extension  of constants of degree at most $(q-1)$. Finally, we have $[\g EK:(E_1)_{\eu {ge}}(E_2)_{\eu {ge}}K]\smid [\g E:(E_1)_{\eu {ge}}(E_2)_{\eu {ge}}]\smid q-1$. Thus
\begin{align*}
[\g K:(K_1)_\eu{ge}(K_2)_\eu{ge}]&\smid[\g EK:(E_1)_{\eu {ge}}(E_2)_{\eu {ge}}K][(E_1)_{\eu {ge}}(E_2)_{\eu {ge}}K:(K_1)_{\eu {ge}}(K_2)_{\eu {ge}}]\\
&\smid (q-1)^2.
\end{align*}
\end{proof}

Let $H$, $H_1$ and $H_2$ be the decomposition groups of $\p$ in 
$\g EK/K$, $(E_1)_{\eu {ge}}K_1/K_1$ and $(E_2)_{\eu {ge}}K_2/K_2$ respectively and let $H':=H|_{\g E}$, $H'_i:=H_i|_{(E_i)_{\eu {ge}}}$, $i=1,2$.

\begin{prop} With the previous notation, we have
\[
(E_1)_{\eu {ge}}^{H'_1}(E_2)_{\eu {ge}}^{H'_2}\subseteq E_{\eu {ge}}^{H'}.
\]
\end{prop}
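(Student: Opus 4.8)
The plan is to show that each generator $(E_i)_{\eu{ge}}^{H'_i}$ of the left-hand composite lands inside $E_{\eu{ge}}^{H'}$, and for this the key point is to compare the decomposition groups of $\p$ across the natural restriction maps. First I would set up the relevant tower: since $E_i = {_n}(K_i)_m\cap\lam N \subseteq E = {_n}K_m\cap\lam N$ and $(E_i)_{\eu{ge}}\subseteq \g E$ by Leopoldt's theorem (the characters of $(E_i)_{\eu{gex}}$ sit inside those of $E_{\eu{gex}}$, and passing to the maximal real subfield, $(E_i)_{\eu{ge}}^+ = (E_i)_{\eu{gex}}^+ \subseteq E_{\eu{gex}}^+ = \g E^+$, hence $(E_i)_{\eu{ge}}\subseteq \g E$), I have $(E_i)_{\eu{ge}}K_i \subseteq \g EK$. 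The group $H$ is the decomposition group of a chosen prime $\mathfrak p_K$ of $K$ over $\p$ in $\g EK/K$, while $H_i$ is the decomposition group of a prime of $K_i$ over $\p$ in $(E_i)_{\eu{ge}}K_i/K_i$; I would fix compatible choices of primes so that the restriction $\Gal(\g EK/K)\to\Gal((E_i)_{\eu{ge}}K_i/K_i)$ (which exists because $(E_i)_{\eu{ge}}K_i/k$ need not be inside $\g EK/K$ directly, but $(E_i)_{\eu{ge}}K_i K = (E_i)_{\eu{ge}}K \subseteq \g EK$) carries the relevant decomposition data correctly.

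The central step is: under the natural map $\rho\colon \Gal(\g EK/K)\to\Gal((E_i)_{\eu{ge}}K/K_iK)= \Gal((E_i)_{\eu{ge}}K_i\cdot K/K)$, followed by restriction to $(E_i)_{\eu{ge}}K_i$, the image of $H$ is contained in $H_i$ — that is, the decomposition group of $\p$ can only shrink (or stay the same) under restriction to a subfield, never grow in the sense needed. Concretely, if $\sigma\in H$ fixes a prime of $\g EK$ over $\p$ (as a set), then its restriction to the subextension $(E_i)_{\eu{ge}}K_i$ fixes the prime below, so $\sigma|_{(E_i)_{\eu{ge}}K_i}\in H_i$ after adjusting by the choice of prime. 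Passing further to $\g E$ and $(E_i)_{\eu{ge}}$ respectively, this says $H'|_{(E_i)_{\eu{ge}}} \subseteq H'_i$, or equivalently $(E_i)_{\eu{ge}}^{H'_i}\subseteq (E_i)_{\eu{ge}}^{H'|_{(E_i)_{\eu{ge}}}}$. Since $(E_i)_{\eu{ge}}\subseteq\g E$ and the action of $H'$ on $(E_i)_{\eu{ge}}$ is by restriction, the fixed field $(E_i)_{\eu{ge}}^{H'}:=(E_i)_{\eu{ge}}\cap\g E^{H'} = (E_i)_{\eu{ge}}^{H'|_{(E_i)_{\eu{ge}}}}$, so $(E_i)_{\eu{ge}}^{H'_i}\subseteq \g E^{H'}\cap(E_i)_{\eu{ge}}\subseteq E_{\eu{ge}}^{H'}$ once we recall $E_{\eu{ge}}^{H'} = \g E^{H'}$ by definition. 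Taking the composite over $i=1,2$ gives $(E_1)_{\eu{ge}}^{H'_1}(E_2)_{\eu{ge}}^{H'_2}\subseteq E_{\eu{ge}}^{H'}$, as desired.

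The main obstacle I anticipate is purely bookkeeping: making the choices of primes over $\p$ in the various fields ($K$, $K_i$, $\g EK$, $(E_i)_{\eu{ge}}K_i$, $\g E$, $(E_i)_{\eu{ge}}$) mutually compatible so that the decomposition-group containments hold on the nose rather than only up to conjugacy. Since all the extensions in play are abelian over $k$ — $K_i/k$, $\g E/k$, $(E_i)_{\eu{ge}}/k$ are all abelian — the decomposition groups are well-defined independently of the prime above $\p$, which removes the conjugacy ambiguity and makes the restriction argument clean; this is the point I would emphasize to avoid a messy argument. The remaining verification, that restriction of a decomposition group to a subextension is contained in the decomposition group of the subextension, is the standard functoriality of decomposition groups (see, e.g., \cite[Chapter on Hilbert ramification theory]{Vil2006}), applied to the subextension $(E_i)_{\eu{ge}}K_i$ of $\g EK$ over the base and then pushed down to $(E_i)_{\eu{ge}}$.
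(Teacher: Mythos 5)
Your proof is correct, but it takes a genuinely different route from the paper's. The paper never touches the decomposition groups directly: it starts from the monotonicity of genus fields, $(K_i)_{\eu{ge}}\subseteq \g K$ for $K_i\subseteq K=K_1K_2$, feeds in the structure theorem $\g K=E_{\eu{ge}}^{H'}K$ and $(K_i)_{\eu{ge}}=(E_i)_{\eu{ge}}^{H'_i}K_i$ to get $(E_i)_{\eu{ge}}^{H'_i}K\subseteq E_{\eu{ge}}^{H'}K$, and then peels off the $K$ by intersecting with $\g E$, using the Galois-correspondence identity $\g E\cap\g K=\g E^{H'}$. You instead stay entirely on the ``$E$-side'': you prove the group-level containment $H'|_{(E_i)_{\eu{ge}}}\subseteq H'_i$ by functoriality of decomposition groups under restriction (the base enlarging from $K_i$ to $K$ can only shrink the decomposition group, and abelianness over $k$ kills the conjugacy ambiguity, as you rightly stress), and then read off the field containment $(E_i)_{\eu{ge}}^{H'_i}\subseteq (E_i)_{\eu{ge}}\cap\g E^{H'}$ from the Galois correspondence. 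Both arguments are sound; the paper's is shorter given the machinery already assembled in Section \ref{S2} and makes the proposition transparently the $E$-level shadow of $(K_1)_{\eu{ge}}(K_2)_{\eu{ge}}\subseteq\g K$, whereas yours is more self-contained, does not need the characterizations of $\g K$ and $(K_i)_{\eu{ge}}$ as fixed fields, and isolates the slightly stronger group-theoretic fact $H|_{(E_i)_{\eu{ge}}}\subseteq H'_i$, which the paper's route leaves implicit. The only point to tidy up in a final write-up is the justification of $(E_i)_{\eu{ge}}\subseteq\g E$ (via $(E_i)_{\eu{gex}}\subseteq E_{\eu{gex}}$ from Leopoldt and $\g E=E_{\eu{gex}}^{+}E$), which you sketch correctly but compress.
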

\begin{proof}
Since $K=K_1K_2$, then $(K_i)_\eu{ge}\subseteq \g K$, $i=1,2$. Because $\g K=E_\eu{ge}^{H'}K$ and $(K_i)_\eu{ge}=(E_i)_\eu{ge}^{H'_i}K_i$, $i=1,2$, it follows that $(E_i)_\eu{ge}^{H'_i}K\subseteq E_\eu{ge}^{H'}K$, $i=1,2$. Now, since $E_i\subseteq (E_i)_{\eu{ge}}$, from the Galois correspondence it follows that $E_i^{H'_i}\subseteq (E_i)_{\eu{ge}}^{H'_i}$, $i=1,2$. Hence $E_i^{H'_i}K\subseteq (E_i)_\eu{ge}^{H'_i}K\subseteq E_\eu{ge}^{H'}K$. Thus, we have the following Galois square.
\[
\xymatrix{
\g E\ar@{-}[d]_{H'}\ar@{-}[r]&\g EK\ar@{-}[d]^H\\
\g E^{H'}\ar@{-}[d]\ar@{--}[r]&\g E^{H'}K=\g K\ar@{-}[d]\\
\g E\cap (E_i)_{\eu {ge}}^{H'_i}K\ar@{-}[d]\ar@{--}[r]&(E_i)_{\eu {ge}}^{H'_i}K\ar@{-}[d]\\
\g E\cap (E_i)^{H'_i}K_i\ar@{-}[d]\ar@{--}[r]&(E_i)^{H'_i}K\ar@{-}[d]\\
\g E\cap K\ar@{-}[r]&K
}
\]

From the Galois correspondence, we have $\g E\cap (E_i)^{H'_i}K\subseteq \g E\cap (E_i)_{\eu {ge}}^{H'_i}K\subseteq \g E\cap \g K=\g E^{H'}$, where $(E_i)^{H'_i}\subseteq \g E\cap (E_i)^{H'_i}K$ and $(E_i)_{\eu {ge}}^{H'_i}\subseteq \g E\cap (E_i)_{\eu {ge}}^{H'_i}K$. Therefore $\;(E_i)_{\eu {ge}}^{H'_i}\subseteq\g E^{H'}$, $i=1,2$.
\end{proof}

\begin{teo} With the previous notation, we have $|H|\smid {\rm lcm}[|H'_1|,|H'_2|]$.
\end{teo}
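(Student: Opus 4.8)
The plan is to exhibit $|H|$ as the order of a Frobenius element in an unramified abelian extension and then compare it componentwise with $|H_1|$ and $|H_2|$. Since $\g E/E$ is unramified at every finite prime and $EK/K$ is a constant field extension, the extension $\g EK/K$ is unramified at every prime of $K$; being also abelian (a compositum of extensions abelian over $k$), its decomposition group $H$ of $\p$ is cyclic, generated by a Frobenius element $\sigma$, so $|H|=\operatorname{ord}(\sigma)$. Moreover restriction gives an isomorphism $\Gal((E_i)_{\eu{ge}}K_i/K_i)\cong\Gal((E_i)_{\eu{ge}}/(E_i)_{\eu{ge}}\cap K_i)$ carrying $H_i$ onto $H_i'$, so $|H_i|=|H_i'|$; hence it suffices to prove $|H|\smid\lcm[|H_1|,|H_2|]$.

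The crucial point is that $\p$ splits completely in $\g EK/(E_1)_{\eu{ge}}(E_2)_{\eu{ge}}K$. Combining $\g E=E_{\eu{gex}}^+E$ and $\g E^+=E_{\eu{gex}}^+$ (\cite[Theorem 2.1]{RockM}), $(E_i)_{\eu{ge}}=(E_i)_{\eu{gex}}^+E_i$ (the same theorem applied to $E_i$), and $(E_i)_{\eu{gex}}^+\subseteq E_{\eu{gex}}^+$ (since $E_i\subseteq E$ forces $(E_i)_{\eu{gex}}\subseteq E_{\eu{gex}}$), one gets $(E_1)_{\eu{ge}}(E_2)_{\eu{ge}}\cdot\g E^+=\g E$, hence $(E_1)_{\eu{ge}}(E_2)_{\eu{ge}}K\cdot\g E^+=\g EK$. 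Now $\g E^+=E_{\eu{gex}}^+\subseteq\lam N^+$, and $\p$ splits completely in $\lam N^+/k$, its decomposition group in $\lam N/k$ being equal to its inertia group $\F^*$ (see \cite{VilCC}); therefore $\p$ splits completely in $\g E^+/k$, hence in $\g E^+$ over every intermediate field, and by translation it splits completely in $\g E^+\cdot(E_1)_{\eu{ge}}(E_2)_{\eu{ge}}K=\g EK$ over $(E_1)_{\eu{ge}}(E_2)_{\eu{ge}}K$.

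Consequently, in the restriction map $\Gal(\g EK/K)\to\Gal((E_1)_{\eu{ge}}(E_2)_{\eu{ge}}K/K)$, whose kernel is $\Gal(\g EK/(E_1)_{\eu{ge}}(E_2)_{\eu{ge}}K)$, the element $\sigma$ maps to the Frobenius of a prime which splits completely in the kernel field, so $\sigma$ has the same order as its image $\bar\sigma$ and $|H|=\operatorname{ord}(\bar\sigma)$. Writing $(E_1)_{\eu{ge}}(E_2)_{\eu{ge}}K=(E_1)_{\eu{ge}}K\cdot(E_2)_{\eu{ge}}K$ gives an embedding $\Gal((E_1)_{\eu{ge}}(E_2)_{\eu{ge}}K/K)\hookrightarrow\Gal((E_1)_{\eu{ge}}K/K)\times\Gal((E_2)_{\eu{ge}}K/K)$ under which $\bar\sigma\mapsto(\sigma_1,\sigma_2)$, $\sigma_i$ the Frobenius of $\p$ in $\Gal((E_i)_{\eu{ge}}K/K)$. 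Via $\Gal((E_i)_{\eu{ge}}K/K)\cong\Gal((E_i)_{\eu{ge}}/(E_i)_{\eu{ge}}\cap K)$ the cyclic group $\langle\sigma_i\rangle$ is the decomposition group of $\p$ in $(E_i)_{\eu{ge}}/(E_i)_{\eu{ge}}\cap K$, which, as $(E_i)_{\eu{ge}}\cap K_i\subseteq(E_i)_{\eu{ge}}\cap K$, is a subgroup of the decomposition group of $\p$ in $(E_i)_{\eu{ge}}/(E_i)_{\eu{ge}}\cap K_i$; the latter has order $|H_i|$ by the isomorphism of the first paragraph, so $\operatorname{ord}(\sigma_i)\smid|H_i|$. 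Since $\langle\bar\sigma\rangle$ is cyclic and embeds in $\langle\sigma_1\rangle\times\langle\sigma_2\rangle$, its generator $(\sigma_1,\sigma_2)$ has order $\lcm[\operatorname{ord}(\sigma_1),\operatorname{ord}(\sigma_2)]$, and therefore $|H|=\operatorname{ord}(\bar\sigma)\smid\lcm[|H_1|,|H_2|]=\lcm[|H_1'|,|H_2'|]$.

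The step I expect to be the real obstacle is the complete splitting of $\p$ in $\g EK/(E_1)_{\eu{ge}}(E_2)_{\eu{ge}}K$ in the second paragraph: it rests on $E_{\eu{gex}}$ lying inside $\lam N$ (so that $\g E^+=E_{\eu{gex}}^+\subseteq\lam N^+$) and on $\p$ splitting completely in $\lam N^+/k$, i.e., precisely on the tame ``ramification shifting to decomposition'' at $\p$ alluded to in the introduction; granting this, the remainder is formal Galois theory together with the cyclicity of the decomposition group of an unramified prime.
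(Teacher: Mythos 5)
Your argument reaches the stated divisibility and is structurally sound, but it follows a genuinely different route from the paper's. The paper stays inside the cyclotomic piece: it observes that $\p$ is totally ramified in $\g E/\g E^{H'}$ (decomposition equals inertia at $\p$ for subfields of $\lam N$) and totally decomposed in $\g E/(E_1)_{\eu{ge}}(E_2)_{\eu{ge}}$, deduces $\g E=\g E^{H'}(E_1)_{\eu{ge}}(E_2)_{\eu{ge}}$, and then bounds $|H'|$ by $e_0=\lcm[e_{\infty,1},e_{\infty,2}]\smid\lcm[|H_1'|,|H_2'|]$ via Abhyankar's Lemma applied to $(E_1)_{\eu{ge}}(E_2)_{\eu{ge}}/(E_1)_{\eu{ge}}^{H_1'}(E_2)_{\eu{ge}}^{H_2'}$. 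You instead work upstairs in the unramified abelian extension $\g EK/K$, realize $H$ as generated by a Frobenius, and use the identity $\g E=\g E^{+}(E_1)_{\eu{ge}}(E_2)_{\eu{ge}}$ (a slight strengthening of the paper's identity, since $\g E^{+}\subseteq\g E^{H'}$) to obtain complete splitting of $\p$ in $\g EK/(E_1)_{\eu{ge}}(E_2)_{\eu{ge}}K$; the $\lcm$ then drops out of the order of $(\sigma_1,\sigma_2)$ in a direct product. Your route trades Abhyankar's Lemma for the elementary fact about orders in products; both arguments ultimately rest on \cite[Theorem 2.1]{RockM} and on decomposition equalling inertia at $\p$ in cyclotomic function fields.

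One step needs repair. You assert that the decomposition group of $\p$ in $(E_i)_{\eu{ge}}/(E_i)_{\eu{ge}}\cap K_i$ has order $|H_i|$ ``by the isomorphism of the first paragraph.'' The restriction isomorphism $\Gal((E_i)_{\eu{ge}}K_i/K_i)\cong\Gal((E_i)_{\eu{ge}}/(E_i)_{\eu{ge}}\cap K_i)$ carries $H_i$ \emph{into}, not necessarily \emph{onto}, the decomposition group downstairs; equality would require the product $ef$ at $\p$ to be the same in $(E_i)_{\eu{ge}}K_i/K_i$ as in $(E_i)_{\eu{ge}}/(E_i)_{\eu{ge}}\cap K_i$, which is not automatic. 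If the downstairs group is strictly larger than $H_i'$, your chain only bounds $\operatorname{ord}(\sigma_i)$ by that larger order. The fix is immediate: restrict to $(E_i)_{\eu{ge}}K_i$ instead of to $(E_i)_{\eu{ge}}$. Since $(E_i)_{\eu{ge}}K=\bigl((E_i)_{\eu{ge}}K_i\bigr)K$, the restriction $\Gal((E_i)_{\eu{ge}}K/K)\to\Gal((E_i)_{\eu{ge}}K_i/K_i)$ is injective and sends the decomposition group of $\p$ over $K$ into the decomposition group of $\p$ over $K_i$, which is $H_i$; hence $\operatorname{ord}(\sigma_i)\smid|H_i|=|H_i'|$ directly. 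With that substitution your proof is complete.
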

\begin{proof}
We have that $\p$ is totally ramified in the extension $\g E/E_\eu{ge}^{H'}$ and that $\p$ is totally decomposed in $\g E/(E_1)_\eu{ge}(E_2)_\eu{ge}$ since 
\[
E=E_1E_2\subseteq (E_1)_\eu{ge}(E_2)_\eu{ge}\subseteq \g E.
\]

It follows that $\g E=E_\eu{ge}^{H'}(E_1)_\eu{ge}(E_2)_\eu{ge}$, since if $L:=E_\eu{ge}^{H'}(E_1)_\eu{ge}(E_2)_\eu{ge}$, in the extension $\g E/L$, $\p$ is totally ramified and totally decomposed.

Now, $(E_1)_{\eu{ge}}/(E_1)_\eu{ge}^{H'_1}$ is totally ramified in $\p$ with ramification index $|H'_1|$, so that
$$\e{(E_1)_\eu{ge}(E_2)_\eu{ge}/(E_1)_\eu{ge}^{H'_1}(E_2)_\eu{ge}}\smid |H'_1|$$
\[
\xymatrix{
(E_1)_\eu{ge}\ar@{-}[r]\ar@{-}[d]_{|H'_1|}&(E_1)_\eu{ge}(E_2)_\eu{ge}^{H'_2}\ar@{-}[d]^{e_{\infty,1}\smid |H'_1|}\\
(E_1)_\eu{ge}^{H'_1}\ar@{-}[r]&(E_1)_\eu{ge}^{H'_1}(E_2)_\eu{ge}^{H'_2}
}
\]

Similarly $\e{(E_1)_\eu{ge}(E_2)_\eu{ge}/(E_1)_\eu{ge}(E_2)_\eu{ge}^{H'_2}}\smid |H'_2|\smid \e{(E_2)_{\eu{ge}}/(E_2)_\eu{ge}^{H'_2}}=[(E_2)_{\eu ge}:(E_2)_\eu{ge}^{H'_2}]$ 
\[
\xymatrix{
(E_2)_\eu{ge}\ar@{-}[r]\ar@{-}[d]_{|H'_2|}&(E_1)_\eu{ge}^{H'_1}(E_2)_\eu{ge}\ar@{-}[d]^{e_{\infty,2}\smid |H'_2|}\\
(E_2)_\eu{ge}^{H'_2}\ar@{-}[r]&(E_1)_\eu{ge}^{H'_1}(E_2)_\eu{ge}^{H'_2}
}
\]

We have $(E_1)_\eu{ge}(E_2)_\eu{ge}=((E_1)_\eu{ge}(E_2)_\eu{ge}^{H'_2})((E_1)_\eu{ge}^{H'_1}(E_2)_\eu{ge})$ and the following diagram
\[
\xymatrix{
&(E_1)_\eu{ge}(E_2)_\eu{ge}\ar@{-}[dr]&\\
(E_1)_\eu{ge}(E_2)_\eu{ge}^{H'_2}\ar@{-}[ur]&&(E_1)_\eu{ge}^{H'_1}(E_2)_\eu{ge}\ar@{-}[dl]^{e_{\infty,2}\smid |H'_2|}\\
&(E_1)_\eu{ge}^{H'_1}(E_2)_\eu{ge}^{H'_2}\ar@{-}[ul]^{e_{\infty,1}\smid |H'_1|}&\\
}
\]

From Abhyankar's Lemma
\[
\e{(E_1)_\eu{ge}(E_2)_\eu{ge}/(E_1)_\eu{ge}^{H'_1}(E_2)_\eu{ge}^{H'_2}}=e_0:={\rm lcm}[e_{\infty,1},e_{\infty,2}]\smid {\rm lcm}[|H'_1|,|H'_2|].
\]

Finally, we have $\p$ is totally ramified in $\g E/\g E^{H'}$ and totally decomposed in $\g E/(E_1)_\eu{ge}(E_2)_\eu{ge}$. 
\[
\xymatrix{
E_\eu{ge}^{H'}\ar@{-}[r]^{|H'|}\ar@{-}[d]&E_\eu{ge}\ar@{-}[d]^{\substack{\p\\
\text{totally decomposed}}}\\
(E_1)_\eu{ge}^{H'_1}(E_2)_\eu{ge}^{H'_2}\ar@{-}[r]_{e_0}&(E_1)_\eu{ge}(E_2)_\eu{ge}
}
\]

Thus $|H'|\smid [\g E:(E_1)_\eu{ge}(E_2)_\eu{ge}]\, e_0$. Thus $|H'|\smid e_0$ and $e_0\smid {\rm lcm}[|H'_1|,|H'_2|]$. Therefore $$|H'|\smid {\rm lcm}[|H'_1|,|H'_2|].$$
\end{proof}

\begin{teo} Let $K_1,K_2\subseteq {_n}{\lam N}_m$, $K=K_1K_2$, $F:={_n}K\cap {\lam N}_m$ and $F_i:={_n}K_i\cap {\lam N}_m$, $i=1,2$. Then 
\[
\g K=(K_1)_\eu{ge}(K_2)_\eu{ge}\textit{ if and only if } \g F=(F_1)_\eu{ge}(F_2)_\eu{ge}.
\]
\end{teo}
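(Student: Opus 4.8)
The plan is to re-run the argument of the Proposition established above — the implication ``$\g K=(K_1)_{\eu{ge}}(K_2)_{\eu{ge}}\Rightarrow\g E=(E_1)_{\eu{ge}}(E_2)_{\eu{ge}}$'', with $E={_n}K_m\cap\lam N$ and $E_i={_n}(K_i)_m\cap\lam N$ — but now keeping track of the field of constants; carrying along that extra datum is exactly what turns the one-sided implication into an equivalence. Write $c_X$ for the degree over ${\ma F}_q$ of the field of constants of a finite abelian extension $X/k$. First I would record the compatibilities
\[
F_m=E_m,\qquad (F_i)_m=(E_i)_m\ \ (i=1,2),\qquad F=F_1F_2,
\]
which follow from $\lam N_m=(\lam N){\ma F}_{q^m}$ and ${_n}K_m=({_n}K){\ma F}_{q^m}$ together with the fact that adjoining ${\ma F}_{q^m}$ commutes with taking intersections of intermediate fields of ${_n}{\lam N}_m/k$ (linear disjointness of constant extensions; Galois correspondence), plus the elementary descent remark below and a short constant-field computation (from $E=E_1E_2$ one gets $F_m=(F_1)_m(F_2)_m$, whence $F=F_1F_2$).

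The descent remark I would isolate is this: if $B\subseteq A$ are finite abelian extensions of $k$, both contained in a common ${_{n'}}{\lam{N'}}_{m'}$ and with $A_{m'}=B_{m'}$, then $c_A\mid m'$, $c_B\mid m'$ and $[A:B]=c_A/c_B$ (since $[A_{m'}:A]=m'/c_A$ and $[A_{m'}:B]=[B_{m'}:B]=m'/c_B$); in particular $A=B$ if and only if $c_A=c_B$. Next I would pass to the constant level. By \cite[Theorem 2.2]{RockM}, applied inside ${_n}{\lam N}_m$ to $K$ and the $K_i$, one has $(\g K)_m=(\g E)_m$ and $((K_i)_{\eu{ge}})_m=((E_i)_{\eu{ge}})_m$; applying it around $F$ instead (which lies in $\lam N_m$, is unramified at $\p$, and for which the auxiliary field of \cite[Theorem 2.2]{RockM} is again $E$, since $F_m\cap\lam N=E_m\cap\lam N=E$) gives $(\g F)_m=(\g E)_m$ and $((F_i)_{\eu{ge}})_m=((E_i)_{\eu{ge}})_m$. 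Fixing $m'$ a multiple of $m$ large enough that $\g K$, $\g F$, $(K_1)_{\eu{ge}}(K_2)_{\eu{ge}}$ and $(F_1)_{\eu{ge}}(F_2)_{\eu{ge}}$ all lie in ${_n}{\lam N}_{m'}$, one obtains $(\g K)_{m'}=(\g E)_{m'}=(\g F)_{m'}$ and $((K_1)_{\eu{ge}}(K_2)_{\eu{ge}})_{m'}=((E_1)_{\eu{ge}}(E_2)_{\eu{ge}})_{m'}=((F_1)_{\eu{ge}}(F_2)_{\eu{ge}})_{m'}$. Since $\g E$ and the $(E_i)_{\eu{ge}}$ sit in $\lam N$ (constant field ${\ma F}_q$), intersecting with $\lam N$ as in the proof of the earlier Proposition shows that the three assertions ``$(\g K)_{m'}=((K_1)_{\eu{ge}}(K_2)_{\eu{ge}})_{m'}$'', ``$(\g F)_{m'}=((F_1)_{\eu{ge}}(F_2)_{\eu{ge}})_{m'}$'' and ``$\g E=(E_1)_{\eu{ge}}(E_2)_{\eu{ge}}$'' are mutually equivalent.

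Now apply the descent remark to the nested pairs $(K_1)_{\eu{ge}}(K_2)_{\eu{ge}}\subseteq\g K$ and $(F_1)_{\eu{ge}}(F_2)_{\eu{ge}}\subseteq\g F$ (genuine inclusions, by the general fact recalled in the Introduction): each of $\g K=(K_1)_{\eu{ge}}(K_2)_{\eu{ge}}$ and $\g F=(F_1)_{\eu{ge}}(F_2)_{\eu{ge}}$ becomes the conjunction of the relevant level-$m'$ equality — all of which are equivalent by the previous paragraph — with an equality of constant fields. If the level-$m'$ equality fails then both sides of the theorem are false and there is nothing to prove; so, assuming $\g E=(E_1)_{\eu{ge}}(E_2)_{\eu{ge}}$, the theorem reduces to showing that $c_{\g K}=c_{(K_1)_{\eu{ge}}(K_2)_{\eu{ge}}}$ holds if and only if $c_{\g F}=c_{(F_1)_{\eu{ge}}(F_2)_{\eu{ge}}}$. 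This is the real obstacle, and I expect to clear it with \cite[Theorem 2.2]{RockM} once more: the passage from $K_i$ to $F_i$ (and from $K$ to $F$) removes only the constant-extension factor attached to the ramified prime $\p$ — the factor of degree $|H_i|$ (resp.\ $|H|$), with $(E_i)_{\eu{ge}}K_i=(K_i)_{\eu{ge}}{\ma F}_{q^{t_i|H_i|}}$ and $t_i=\deg S_\infty(K_i)$ — so that, together with the structure $\g K=E_{\eu{ge}}^{H'}K$ of Theorem~\ref{Tprin}, the ``constant defect'' $c_{\g K}/c_{(K_1)_{\eu{ge}}(K_2)_{\eu{ge}}}$ is the same invariant as the corresponding defect on the $F$-side; equality of these two defects then yields the desired equivalence. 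Everything apart from this bookkeeping of fields of constants is formal.
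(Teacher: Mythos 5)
There is a genuine gap, and it stems from a misreading of what the field $F$ is. You treat the passage from $K$ to $F={_n}K\cap\lam N_m$ (and from $K_i$ to $F_i$) as ``removing only the constant-extension factor attached to $\p$,'' quoting the formula $(E_i)_{\eu{ge}}K_i=(K_i)_{\eu{ge}}{\ma F}_{q^{t_i|H_i|}}$ — but that formula describes the passage to $E_i\subseteq\lam N$, not to $F_i\subseteq\lam N_m$. Since $\lam N_m$ already contains ${\ma F}_{q^m}$, intersecting ${_n}K$ with $\lam N_m$ keeps the constant part of $K$ and strips off the $L_n$--component, i.e.\ the \emph{wildly ramified} part at $\p$. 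This is exactly what the paper's proof exploits: by \cite[Theorem 4.4]{MaRzVi2013} one has $\g K=M\g F$ with $M=K\lam N_m\cap L_n$ and $[\g K:\g F]=|V|$, $V$ the first ramification group of $\p$ — a $p$--power, not a constant-field degree — and likewise $(K_i)_{\eu{ge}}=M_i(F_i)_{\eu{ge}}$; the whole theorem then reduces to $M=M_1M_2$ (proved via $M\lam N_m=M_1M_2\lam N_m$ and the Galois correspondence) together with $M\cap\lam N_m=k$. Your constant-field bookkeeping is blind to precisely this invariant.

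Concretely, two steps of your argument fail. First, the reduction to level $m'$ needs $(\g K)_{m'}=(\g E)_{m'}$; but whenever the wild part $M$ is nontrivial one has $M\subseteq\g K$ while $M\not\subseteq\lam N_{m'}\supseteq(\g E)_{m'}$ (since $L_n\cap\lam N_{m'}=k$), so this equality cannot hold in the general case the theorem addresses — it is only plausible when $\p$ is tamely ramified in $K/k$. Second, the decisive claim that the ``constant defect'' $c_{\g K}/c_{(K_1)_{\eu{ge}}(K_2)_{\eu{ge}}}$ equals the corresponding quantity on the $F$--side is exactly where the content of the theorem lies, and you do not prove it; you write that you ``expect to clear it'' with \cite[Theorem 2.2]{RockM}. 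Even granting the tame case, that step is the whole theorem in disguise, and as the defect in question is governed by the $L_n$--components rather than by fields of constants, the tool you propose is not the right one. The fix is to replace the constant-field ledger by the decomposition $\g K=M\g F$, $(K_i)_{\eu{ge}}=M_i(F_i)_{\eu{ge}}$ and to prove $M=M_1M_2$.
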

\begin{proof}
Since $K=K_1K_2$ we have ${_n}K={_n}{(K_1K_2)}={_n}{(K_1)}{_n}{(K_2)}$. From the Galois correspondence it follows that $F=F_1F_2$.

From \cite[Theorem 4.4]{MaRzVi2013} we have that $\g K=M\g F$ where $M=K\lam N_m\cap L_n$, $\g F$ is the genus field of $F$. We have $M\cap \g F\subseteq L_n\cap{\lam N}_m =k$. Then $\g K=M\g F$ and $(K_i)_\eu{ge}=M_i(F_i)_\eu{ge}$, where $M_i=K_i{\lam N}_m\cap L_n$, $i=1,2$. If $V$ is the first ramification group of $\p$ in $\g K/k$, then $\g F=K_{\eu{ge}}^V$, and we have that $[\g K:\g F]=[M:k]=|V|$.
\[
\xymatrix{
F_{{\eu {ge}}}\ar@{-}[rr]^V\ar@{-}[d]&&\g K=M\g F\ar@{-}[d]\\
F\ar@{-}[rr]\ar@{-}[dd]_{\substack{\p \text{\ is tamely}\\
\text{ramified}}}&&FM\ar@{-}[dd]^{
\substack{\S M\text{\ is tamely}\\ \text{ramified}}}\\
&K\ar@{-}[ru]\ar@{-}[dl]\\ k\ar@{-}[rr]^V&&M
}
\]

Then, by the Galois correspondence we have $\g F=\g K\cap \lam N_m$ and $[\g K:M]=[\g F:k]$.

Now, first suppose that $\g K=(K_1)_\eu{ge}(K_2)_\eu{ge}$. Since
\[
M\g F=\g K=(K_1)_\eu{ge}(K_2)_\eu{ge}=M_1(F_1)_\eu{ge}M_2(F_2)_\eu{ge}=M_1M_2((F_1)_\eu{ge}(F_2)_\eu{ge}).
\]

Note that $M{\lam N}_m=M_1{\lam N}_m M_2{\lam N}_m=M_1M_2{\lam N}_m$. Therefore, by the Galois correspondence $M=M_1M_2$. Thus $M\g F=M_1M_2((F_1)_\eu{ge}(F_2)_\eu{ge})=M((F_1)_\eu{ge}(F_2)_\eu{ge})$. It follows that $\g F=(F_1)_\eu{ge}(F_2)_\eu{ge}$.

Conversely, assume that $\g F=(F_1)_\eu{ge}(F_2)_\eu{ge}$. Then 
\[
\g K=M\g F=M_1(F_1)_\eu{ge}M_2(F_2)_\eu{ge}=(K_1)_\eu{ge}(K_2)_\eu{ge}.
\]
\end{proof}

\end{document}